\def\qed{\hspace*{\fill}~\IEEEQED\par\endtrivlist\unskip}
\def\Re{\mathbb{R}}
\def\Lemma#1{Lemma~\ref{#1}}
\def\Theorem#1{Thm.~\ref{#1}}
\def\Sec#1{Sec.~\ref{#1}}
\def\notes#1{\marginpar{\tiny #1}\typeout{Notes!
Notes!
Notes!
}}
\renewcommand{\notes}[1]{\typeout{notes!}}
\def\FRAC#1#2#3{\genfrac{}{}{}{#1}{#2}{#3}}
\def\half{{\mathchoice{\FRAC{1}{1}{2}}%
{\FRAC{2}{1}{2}}%
{\FRAC{3}{1}{2}}%
{\FRAC{4}{1}{2}}}}
\newcommand{\tr}{\mbox{tr}}
\def\Re{\field{R}}
\def\Sec#1{Sec.~\ref{#1}}
\def\transpose{{\hbox{\rm\tiny T}}}
\def\clB{{\cal B}}
\def\clL{{\cal L}}
\def\clP{{\cal P}}
\def\clZ{{\cal Z}}
\def\Sec#1{Sec~\ref{#1}}
\def\E{{\sf E}}
\def\Sec#1{Sec.~\ref{#1}}
\def\IEEEQEDclosed{\mbox{\rule[0pt]{1.3ex}{1.3ex}}}
\def\qed{\nobreak\hfill\IEEEQEDclosed}
\def\clZ{{\cal Z}}
\newtheorem{theorem}{Theorem}
\newtheorem{example}{Example}
\newtheorem{definition}{Definition}
\newtheorem{lemma}{Lemma}
\newtheorem{remark}{Remark}
\newtheorem{proposition}{Proposition}
\def\beq{\begin{eqnarray}} 
\def\bc{\begin{center}} 
\def\be{\begin{enumerate}}
\def\bi{\begin{itemize}} 
\def\bs{\begin{small}}
\def\bS{\begin{slide}}
\def\ec{\end{center}} 
\def\ee{\end{enumerate}}
\def\ei{\end{itemize}}
\def\es{\end{small}}
\def\eS{\end{slide}}
\def\eeq{\end{eqnarray}}
\newcommand{\newP}[1]{\medskip\noindent{\bf #1:}}
\newcommand{\ud}{\,\mathrm{d}}
\def\Re{\mathbb{R}}
\def\E{{\sf E}}
\def\clY{{\cal Y}}
\def\Sec#1{Sec.~\ref{#1}}
\def\Thm#1{Thm.~\ref{#1}}
\def\Prop#1{Prop.~\ref{#1}}
\def\clB{{\cal B}}
\def\clD{{\cal D}}
\def\clL{{\cal L}}
\def\clP{{\cal P}}
\def\clZ{{\cal Z}}
\renewcommand{\Re}{\mathbb{R}}
\def\FRAC#1#2#3{\genfrac{}{}{}{#1}{#2}{#3}}
\newcommand{\var}{\text{var}}
\def\clA{{\cal A}}
\def\clB{{\cal B}}
\def\clD{{\cal D}}
\def\clF{{\cal F}}
\def\clL{{\cal L}}
\def\clN{{\cal N}}
\def\clO{{\cal O}}
\def\clP{{\cal P}}
\def\clS{{\cal S}}
\def\clU{{\cal U}}
\def\clV{{\cal V}}
\def\clY{{\cal Y}}
\def\clZ{{\cal Z}}
\def\E{{\sf E}}
\def\bS{\mathbb{S}}
\def\sJ{{\sf J}}
\def\ones{{\sf 1}}
\def\sP{{\sf P}}
\def\tsP{{\tilde{\sf P}}}
\def\tE{{\tilde{\sf E}}}
\def\tp{{\hbox{\rm\tiny T}}}
\def\bmu{{\bar\mu}}
\def\dvar{\operatorname{var}}
\def\chisq{{\chi^2}}
\def\kl{{\sf D}}
\def\tv{{\text{\tiny TV}}}
\def\opt{{\text{\rm (opt)}}}
\renewcommand{\limsup}{\mathop{\operatorname{limsup}}}
\renewcommand{\liminf}{\mathop{\operatorname{liminf}}}
\def\LL{{\sf L}}
\def\II{{\cal E}}
\def\VV{\var^\rho(Y_0(X_0))}
\begin{document}
\title{Variance Decay Property for Filter Stability}
\author{Jin W. Kim, \IEEEmembership{Member, IEEE}, and
	Prashant G. Mehta, \IEEEmembership{Fellow, IEEE}
	\thanks{This work is supported in part by the AFOSR award
          FA9550-23-1-0060 (Mehta) and the DFG grant 318763901/SFB1294 (Kim).
   A portion of this work was conducted during authors' visit to the Issac Newton Institute for Mathematical Sciences, Cambridge.
  }
	\thanks{J. W. Kim is with the Institute of Mathematics at the University of Potsdam
		(e-mail: jin.won.kim@uni-potsdam.de).}
	\thanks{P. G. Mehta is with 
		the Coordinated Science Laboratory and the Department of Mechanical Science
		and Engineering at the University of Illinois at Urbana-Champaign
		(e-mail: mehtapg@illinois.edu).}}
\maketitle

\begin{abstract}
This paper is concerned with the problem of nonlinear (stochastic)
filter stability of a hidden Markov model (HMM) with white noise
observations.  A contribution is the variance decay property
which is used to conclude filter stability.  For this purpose, a new
notion of the Poincar\'e inequality (PI) is introduced for the
nonlinear filter.  PI is related
to both the ergodicity of the Markov process as well as the
observability of the HMM.   
The proofs are based upon a recently discovered minimum variance
duality which is used to transform the nonlinear filtering problem
into a  stochastic optimal control problem for a backward stochastic
differential equation (BSDE).  
\end{abstract}

\begin{IEEEkeywords}
	Nonlinear filtering; Optimal control; Stochastic systems.
\end{IEEEkeywords}

\section{Introduction}\label{sec:intro}

This paper is on the topic of nonlinear (stochastic) filter
stability -- in the sense
of asymptotic forgetting of the initial condition.  The results are described for the continuous-time 
hidden Markov model (HMM) with white noise observations.  The
novelty comes from the methodological aspects which here are based on
the minimum variance 
duality introduced in our prior work: dual
characterization of stochastic observability presented in~\cite{duality_jrnl_paper_I}; and the dual
optimal control problem described in~\cite{duality_jrnl_paper_II}.  In
the present paper, these are used to investigate the question of
nonlinear filter stability,

\subsection{Literature review of filter stability}

While duality is central to the stability analysis of the Kalman
filter and also in the study of deterministic minimum energy estimator (MEE)~\cite{rawlings2017model},  with the
sole exception of van Handel's PhD thesis~\cite{van2006filtering},
duality is absent in stochastic filter stability theory.  
Viewed from a certain lens, the story of filter stability
is a story of two parts: (i) Stability of the Kalman
filter where dual (control-theoretic)
definitions and methods are paramount; and 
(ii) Stability of the
nonlinear filter where there is little hint of such methods. 

The disconnect is already seen in the earliest works -- in the two parts
of the pioneering 1996 paper of Ocone and Pardoux~\cite{ocone1996asymptotic} on the topic of
filter stability.
The paper is divided into two parts: Sec.~2 of the
paper considers the linear Gaussian model and the Sec.~3 considers
the nonlinear models (HMM).
While the problem is the same, the definitions, tools and techniques
of the two sections have no overlap. In~\cite[Sec.~2]{ocone1996asymptotic},
there are several control-theoretic definitions given, optimal control
techniques employed for analysis, references cited, while in~\cite[Sec.~3]{ocone1996asymptotic}
there are none. The passage of time did not change matters much: In
his award winning 2010 MTNS review
paper, van Handel writes
``{\it The proofs of the Kalman filter results are of essentially
	no use [for nonlinear filter stability], so we
	must start from scratch}~\cite{van2010nonlinear}.''

Our paper is the first
time that such a complete generalization of the linear Gaussian results has been
possible based on the use of duality.  A summary of the two main
contributions is described as part
of~\Sec{sec:original_contributaions} after the literature review.

The problem of nonlinear filter stability is far from
straightforward. In fact,~\cite[Sec.~3]{ocone1996asymptotic} is based
on some prior work of Kunita~\cite{kunita1971asymptotic} which was later found to contain a
gap, as discussed in some detail in~\cite{baxendale2004asymptotic}
(see also~\cite[Sec.~6.2]{JinPhDthesis}).
The gap also served to invalidate the main result in~\cite[Sec.~3]{ocone1996asymptotic}.
The literature on filter stability is divided into two cases: 
\begin{itemize}
\item The case where the Markov process forgets
the prior and therefore the filter ``inherits'' the same property; 
\item The case where the observation
provides sufficient information about the hidden state, allowing the
filter to correct its erroneous initialization.
\end{itemize}
These two cases are referred to as the ergodic and non-ergodic signal
cases, respectively.  While the two cases are intuitively reasonable,
they spurred much work during 1990-2010 with a complete
resolution appearing only at the end of this
time-period. See~\cite{chigansky2006stability,chigansky2009intrinsic}
for a comprehensive survey of the filter stability problem including
some of this historical context.

For the ergodic
signal case, apart from the pioneering
contribution~\cite{ocone1996asymptotic}, early work is based
on contraction analysis of the random matrix products arising from
recursive application of the Bayes' formula~\cite{atar1997lyapunov} (see also~\cite[Ch. 4.3]{Moulines2006inference}).
The analysis of the Duncan-Mortensen-Zakai (DMZ) equation leads to useful 
formulae for the Lyapunov exponents under assumptions on model
parameters and noise limits~\cite{atar1997exponential}, and
convergence rate estimates are obtained using Feynman-Kac type
representation~\cite{Atar1999}. A comprehensive account for the ergodic signal case appears
in~\cite{budhiraja2003asymptotic} and the first complete solution
appeared in~\cite{baxendale2004asymptotic}.   

For the non-ergodic signal case, a notable early
contribution is~\cite{clark1999relative} where a formula for the
relative entropy is derived.  It is shown that the relative entropy
is a Lyapunov function for the filter (see Rem.~\ref{rem:KL_divergence}).  Notable also is the
partial differential equation (PDE) approach
of~\cite{stannat2005stability,stannat2006} where sufficient conditions
for filter stability are described for a certain class of HMMs on the
Euclidean state-space with
linear observations (see
also~\cite[Ch.~4]{van2006filtering}).  
Our own prior work~\cite{kim2021detectable,kim2019observability} is closely inspired by~\cite{baxendale2004asymptotic} who
were the first to formulate certain observability-type ``identifying
conditions'' for the HMM on finite state-space.  These
conditions were formulated in terms of the HMM model parameters and
shown to be sufficient for the stability of the Wonham filter.

For a general class of HMMs, the fundamental definition for stochastic
observability and detectability is due to van Handel~\cite{van2009observability,van2009uniform}.  There
are two notable features: (i) the definition made
rigorous the intuition described in the two
cases~\cite[Sec. II-B and Sec. V]{van2010nonlinear}; and (ii) the
definition led to meaningful conditions that were shown to be 
necessary and sufficient for filter stability~\cite[Thm.~III.3 and
Thm.~V.2]{van2010nonlinear}.  The proof techniques are broadly
referred to as the {\em intrinsic approach}.
In~\cite{chigansky2009intrinsic}, the authors explain 
	``{\it By `intrinsic' we mean methods which directly exploit
          the fundamental representation of the filter as a
          conditional expectation through classical probabilistic
          techniques.}''  Recent
        extensions and refinements of these can be found
        in~\cite{mcdonald2018stability,mcdonald2019cdc,mcdonald2022robustness}.

A thorough mathematical review of these past approaches to the problem of
filter
stability appears in the PhD thesis of the first
author~\cite[Ch.~6]{JinPhDthesis}.  Additional mathematical 
comparisons with the approach of the present paper appear as part 
of Remarks~\ref{rm:contraction-literature}-\ref{rem:KL_divergence}
in~\Sec{ssec:literature-comparison} and Table~\ref{tb:rates} in
Appendix~\ref{apdx:forward-map-rates}.

\subsection{Summary of original contributions}\label{sec:original_contributaions}

The two main contributions are as follows:
\begin{enumerate}
\item The paper introduces a new notion of {\em Poincar\'e inequality}
  (PI) for the nonlinear filter.  The PI is used to obtain a novel
  formula~\eqref{eq:exp_stability_formula} for the filter stability. 
\item PI is related
to the two model properties, namely, the observability of the HMM, and the ergodicity of the signal
model (\Prop{prop:sufficiency}).  
\end{enumerate}

A key contribution is the {\em variance
  decay property} (Eq.~\eqref{eq:VDP}).  The property at 
once unifies and generalizes two bodies of results where the notion is variance is important:
\begin{itemize}
	\item Stability analysis of the Kalman filter:  The notion of
          variance is the conditional covariance (also referred to as the error covariance), which recall is given by
          the solution of the DRE.  
	\item Stochastic stability: The notion of
          variance is related to the Poincar\'e inequality (PI) which is a standard assumption to
          conclude asymptotic
          stability of a Markov process (without conditioning). 
          \end{itemize}

\subsection{Outline}

The outline of the remainder of this paper is as follows. The
mathematical background on HMMs and the filter stability problem
appears in~\Sec{sec:problem-formulation}.  The two central concepts in
this paper -- the backward map and the variance decay property --
are introduced in~\Sec{sec:backward_map}.  \Sec{sec:func_notn_assmp}
contains a discussion of function
spaces and the dual optimal control problem.  This is followed by
two sections that describes the two main contributions:
\Sec{sec:dual-optimal-ctrl} introduces the Poincar\'e inequality for nonlinear filter and~\Sec{sec:model_prop} describes its relationship to the HMM model properties for a finite state-space model.    
The paper closes with some conclusions and
directions for future work in~\Sec{sec:conc}. 
Details of the proofs appear in the Appendix.

\medskip

\section{Math Preliminaries and problem statement}\label{sec:problem-formulation}

\subsection{Hidden Markov model}

\newP{HMM}
On the probability space $(\Omega, \clF_T, \sP)$, consider a pair
of continuous-time stochastic processes $(X,Z)$ as follows:
\begin{itemize}
	\item The \emph{state process} $X = \{X_t:\Omega\to \bS:0\le t \le
	T\}$ is a Feller-Markov process taking values in the state-space
	$\bS$ which is assumed to be a locally compact Polish
        space. The prior is denoted by $\mu \in \clP(\bS)$ (where
        $\clP(\bS)$ is the space of probability measures defined on
        the Borel $\sigma$-algebra on $\bS$) and $X_0\sim \mu$. The infinitesimal generator of $X$ is
	denoted by $\clA$. 
	
	\item  The \emph{observation process} $Z = \{Z_t:0\le t \le T\}$ satisfies the stochastic differential
	equation (SDE):
	\begin{equation}\label{eq:obs-model}
		Z_t = \int_0^t h(X_s) \ud s + W_t,\quad t \ge 0
	\end{equation}
	where $h:\bS\to \Re^m$ is referred to as the observation function and $W =
	\{W_t:0\le t \le T\}$ is an $m$-dimensional Brownian motion
	(B.M.). We write $W$ is $\sP$-B.M.
	It is assumed that $W$ is independent of $X$. The filtration generated by the observation is denoted by
	$\clZ :=\{\clZ_t:0\le t\le T\}$  where $\clZ_t = \sigma\big(\{Z_s:0\le
	s\le t\}\big)$.
\end{itemize}
The above is referred to as the \emph{white noise observation model} of 
nonlinear filtering. The model is denoted by $(\clA,h)$.  For reasons
of well-posedness, the model requires additional technical
conditions.  In lieu of stating these conditions for general
class of HMMs, we restrict our study to the examples described in the following:

\begin{example}[Examples of state processes]\label{ex:state_processes}
  The two examples are as follows:
\begin{itemize}
	\item $\bS = \{1,2,\ldots,d\}$.  A real-valued function $f$ is
	identified with a vector in $\Re^d$ where the $x-{\text{th}}$
        element of the vector is $f(x)$ for $x\in\bS$.  Based on this, the
observation function $h$ is a $d\times m$ matrix and
the generator $\clA$ is a $d\times d$ transition rate matrix,
whose $(x,y)$ entry (for $x,y\in\bS$ and $x\neq y$) is the positive rate of transition from
$x\mapsto y$ and $\clA(x,x)=-\sum_{y:y\neq x} \clA(x,y)$.  
	\item $\bS\subseteq\Re^d$. $X$ is an It\^{o} diffusion process defined by:
	\begin{equation}\label{eq:ito_diffusion}
	\ud X_t = a(X_t)\ud t + \sigma(X_t)\ud B_t,\quad X_0\sim \mu
	\end{equation}
	where $a(\cdot)$ and $\sigma(\cdot)$ are given $C^1$ smooth
        functions that satisfy linear growth conditions at $\infty$.  The infinitesimal generator $\clA$ is given by~\cite[Thm. 7.3.3]{oksendal2003stochastic}
	\[
	(\clA f)(x) = a^\tp(x) \nabla f(x) + \half \tr\big(\sigma\sigma^\tp(x)(D^2f)(x)\big),\;x\in\bS
	\]
	where $\nabla f$ and $D^2f$ are the gradient vector and the
        Hessian matrix, respectively, of the function $f\in C^2(\Re^d)$.
	\item The linear Gaussian model is the special case of an
          It\^{o} diffusion where the drift $a(\cdot)$ is linear,
          $\sigma$ is a constant matrix, and the prior $\mu$ is a
          Gaussian density. 
\end{itemize}
\end{example}

\medskip

\begin{remark}
Of the two models of state processes, the HMM on a finite state-space is of the most interest.
We continue to use the notation $(\clA,h)$ and state the results in their
general form with the understanding that, for the general class of
HMMs, the calculations are formal.
\end{remark}

\newP{Nonlinear filter}
The objective of nonlinear (or stochastic) filtering is to compute the conditional expectation
\[
\pi_T(f):= \E\big(f(X_T)\mid \clZ_T\big),\quad f \in C_b(\bS)
\]
where $ C_b(\bS)$ is the space of continuous and bounded functions. 
The conditional expectation is referred to as the {\em nonlinear filter}. 
Assuming a certain technical (Novikov's) condition holds, the
nonlinear filter solves the \emph{Kushner-Stratonovich equation}~\cite{xiong2008introduction}: 
\begin{equation}\label{eq:Kushner}
	\ud \pi_t(f) = \pi_t(\clA f) \ud t +
        \big(\pi_t(hf)-\pi_t(f)\pi_t(h)\big)^\tp \ud I_t
\end{equation}
with $\pi_0=\mu$ where the \emph{innovation process} is defined by
\[
I_t := Z_t - \int_0^t \pi_s(h)\ud s,\quad t \ge 0
\]
With $h=c\ones$, the coefficient of $\ud I_t$ in~\eqref{eq:Kushner} is
zero and $\{\pi_t:t\geq 0\}$ becomes a deterministic process.  The
resulting evolution equation is referred to as the forward Kolmogorov equation.

\subsection{Filter stability: Definitions and metrics}\label{sec:filter_stab_defn}

Let $\rho \in \clP(\bS)$.  On the common measurable space $(\Omega,
\clF_T)$, $\sP^\rho$ is used to denote another probability measure
such that the transition law of $(X,Z)$ is identical but $X_0\sim
\rho$ (see~\cite[Sec.~2.2]{clark1999relative} for an explicit construction
of $\sP^\rho$ as a probability measure over
the space of the trajectories of the process $(X,Z)$.).  The associated expectation operator is denoted by
$\E^\rho(\cdot)$ and the nonlinear filter by $\pi_t^\rho(f) =
\E^\rho\big(f(X_t)|\clZ_t\big)$.  It
solves~\eqref{eq:Kushner} with $\pi_0=\rho$.  The
two most important choices for $\rho$ are as follows:
\begin{itemize}
\item $\rho=\mu$.
 The measure $\mu$ has the meaning of the true prior.    
\item  $\rho=\nu$. 
The measure $\nu$ has
  the meaning of the incorrect prior that is used to compute the
filter by solving~\eqref{eq:Kushner} with $\pi_0=\nu$.  It is assumed
that $\mu\ll\nu$.
\end{itemize}
The relationship between $\sP^\mu$ and $\sP^\nu$ is as follows ($\sP^\mu|_{\clZ_t}$ denotes
the restriction of $\sP^\mu$ to the $\sigma$-algebra $\clZ_t$):

\begin{lemma}[Lemma 2.1 in \cite{clark1999relative}] \label{lm:change-of-Pmu-Pnu}
	Suppose $\mu\ll \nu$. Then 
	\begin{itemize}
		\item $\sP^\mu\ll\sP^\nu$, and the change of measure is given by
		\begin{equation*}\label{eq:P-mu-P-nu}
			\frac{\ud \sP^\mu}{\ud \sP^\nu}(\omega) =
                        \frac{\ud \mu}{\ud
                          \nu}\big(X_0(\omega)\big)\quad
                        \sP^\nu\text{-a.s.} \;\omega
		\end{equation*}
		\item For each $t > 0$, $\pi_t^\mu \ll \pi_t^\nu$, $\sP^\mu|_{\clZ_t}$-a.s..
	\end{itemize} 
	
\end{lemma}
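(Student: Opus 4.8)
The plan is to handle the two items separately: the first is a disintegration argument, the second an application of the abstract Bayes formula.

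For the first item, I would start from the defining feature of $\sP^\rho$ recalled in the text: under $\sP^\mu$ and $\sP^\nu$ the pair $(X,Z)$ has the same transition law and only the law of $X_0$ differs. Writing $\Xi := (X,Z)$ for the whole trajectory and $Q_x(\varble) := \sP(\Xi\in\varble \mid X_0 = x)$ for the common conditional law of the path given the initial state, this yields the disintegrations $\sP^\mu(\Xi\in A) = \int_\bS Q_x(A)\,\mu(\ud x)$ and $\sP^\nu(\Xi\in A) = \int_\bS Q_x(A)\,\nu(\ud x)$. Substituting $\mu(\ud x) = \frac{\ud\mu}{\ud\nu}(x)\,\nu(\ud x)$ and noting that $\frac{\ud\mu}{\ud\nu}(X_0)$ is $\sigma(X_0)$-measurable turns the right-hand side into $\E^\nu\big[\ind_{\{\Xi\in A\}}\,\frac{\ud\mu}{\ud\nu}(X_0)\big]$, which is precisely the first claim and in particular gives $\sP^\mu\ll\sP^\nu$ with density $\frac{\ud\mu}{\ud\nu}(X_0)$. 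The only point requiring care is measurability of $x\mapsto Q_x(A)$, which holds since $\bS$ is Polish (regular conditional probabilities exist); otherwise this item is bookkeeping and is in any case close to the construction of $\sP^\rho$ cited from \cite{clark1999relative}.

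For the second item, abbreviate $\Phi := \frac{\ud\mu}{\ud\nu}(X_0)$, which by the first item equals $\frac{\ud\sP^\mu}{\ud\sP^\nu}$ and is strictly positive $\sP^\mu$-a.s. The abstract Bayes formula for the change of measure from $\sP^\nu$ to $\sP^\mu$ gives, $\sP^\mu$-a.s.,
\begin{equation*}
\pi_t^\mu(f) \;=\; \frac{\E^\nu\big[\,f(X_t)\,\Phi \,\big|\, \clZ_t\,\big]}{\E^\nu\big[\,\Phi \,\big|\, \clZ_t\,\big]},\qquad f\in C_b(\bS).
\end{equation*}
The main step is to push $\Phi$ through the conditioning on $X_t$: since $\bS$ is Polish there is a $\clZ_t\otimes\clB(\bS)$-measurable kernel $\lambda_t(\omega,y)$ such that $\E^\nu[\Phi\mid\clZ_t\vee\sigma(X_t)] = \lambda_t(\varble,X_t)$, $\sP^\nu$-a.s.; conditioning first on $\clZ_t\vee\sigma(X_t)$, pulling out the $\clZ_t\vee\sigma(X_t)$-measurable factor $f(X_t)$, and then using that $\pi_t^\nu$ is a regular conditional law of $X_t$ given $\clZ_t$, the numerator becomes $\int_\bS f(y)\lambda_t(\varble,y)\,\pi_t^\nu(\ud y)$ and the denominator $\int_\bS\lambda_t(\varble,y)\,\pi_t^\nu(\ud y)$. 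This identifies
\begin{equation*}
\pi_t^\mu(\ud y) \;=\; \frac{\lambda_t(\varble,y)}{\int_\bS \lambda_t(\varble,y')\,\pi_t^\nu(\ud y')}\,\pi_t^\nu(\ud y),
\end{equation*}
so $\pi_t^\mu\ll\pi_t^\nu$ with an explicit density. I would finish by checking the denominator is positive $\sP^\mu$-a.s.: on $A:=\{\E^\nu[\Phi\mid\clZ_t]=0\}\in\clZ_t$ one has $\sP^\mu(A)=\E^\nu[\Phi\,\ind_A]=0$.

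The hard part, such as it is, is the construction of the jointly measurable kernel $\lambda_t(\omega,y)$ representing $\E^\nu[\Phi\mid\clZ_t\vee\sigma(X_t)]$, together with the disintegration $\E^\nu[g(\varble,X_t)\mid\clZ_t] = \int_\bS g(\varble,y)\,\pi_t^\nu(\ud y)$ for jointly measurable $g$; both rest on $\bS$ being Polish (existence of regular conditional probabilities) plus care with null sets, and it is exactly this step that upgrades the trajectory-level statement $\sP^\mu\ll\sP^\nu$ to the fibrewise statement $\pi_t^\mu\ll\pi_t^\nu$. A welcome by-product is the explicit formula for $\ud\pi_t^\mu/\ud\pi_t^\nu$ as a function on $\bS$, which is the object around which the subsequent filter-stability analysis revolves.
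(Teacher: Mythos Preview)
The paper does not give its own proof of this lemma: it is stated verbatim as ``Lemma~2.1 in~\cite{clark1999relative}'' and simply cited, so there is no in-paper argument to compare against. Your proposal is therefore being assessed on its own merits.

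Your argument is correct and is in fact the standard route (and essentially what appears in the cited reference). The first item is exactly the disintegration you describe: since the conditional law of the trajectory given $X_0$ is the same under both measures, the only change is the marginal of $X_0$, and the density formula drops out. For the second item, your use of the abstract Bayes (Kallianpur--Striebel) formula together with the tower property through $\clZ_t\vee\sigma(X_t)$ is the right mechanism; the identification of $\lambda_t(\omega,y)$ as a jointly measurable version of $\E^\nu[\Phi\mid\clZ_t\vee\sigma(X_t)]$ is precisely where the Polish assumption on $\bS$ earns its keep, and you flag this correctly. The check that the denominator $\E^\nu[\Phi\mid\clZ_t]$ is strictly positive $\sP^\mu$-a.s.\ is also necessary and your one-line argument for it is fine. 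As you note, the by-product is the explicit density $\gamma_t = \ud\pi_t^\mu/\ud\pi_t^\nu$, which the paper uses heavily (it is the object $\gamma_T$ in the backward map~\eqref{eq:bmap}).
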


The following definition of filter stability is based on
$f$-divergence (Because $\mu$ has the meaning of the correct prior,
the expectation is with respect to $\sP^\mu$):

\begin{definition}\label{def:filter-stability}
	The nonlinear filter is \emph{stable} in the sense of 
	\begin{align*}
		\text{(KL divergence)}\qquad& \E^\mu\big(\kl(\pi_T^\mu\mid \pi_T^\nu)\big) \; \longrightarrow\; 0\\
		\text{($\chi^2$ divergence)}\qquad& \E^\mu\big(\chisq(\pi_T^\mu\mid \pi_T^\nu)\big) \; \longrightarrow\; 0\\
		\text{(Total variation)}\qquad& \E^\mu\big(
                                                \|\pi_T^\mu - \pi_T^\nu\|_\tv\big) \; \longrightarrow\; 0
	\end{align*}
	as $T\to \infty$ for every $\mu, \nu\in\clP(\bS)$ such that
        $\mu\ll \nu$. (See Appendix~\ref{ss:pf-prop71} for definitions
        of the $f$-divergence).
	
\end{definition}

Apart from $f$-divergence based definitions, the following definitions of 
filter stability are also of historical interest:

\begin{definition}\label{def:FS}
	The nonlinear filter is stable in the sense of
	\begin{align*}
		\text{($L^2$)}\qquad & \E^\mu\big(|\pi_T^\mu(f)-\pi_T^\nu(f)|^2\big) \;\longrightarrow\; 0\\
		\text{(a.s.)}\qquad & |\pi_T^\mu(f) - \pi_T^\nu(f)|\;\longrightarrow\; 0\quad \sP^\mu\text{-a.s.}
	\end{align*}
	as $T\to \infty$, for every $f\in C_b(\bS)$ and $\mu,\nu\in\clP(\bS)$ s.t.~$\mu \ll \nu$.
	
\end{definition}

In this paper, our objective is to prove filter stability in the
sense of $\chisq$-divergence.  Based on well known relationship between
f-divergences, this also implies other types of  stability as follows:

\begin{proposition}\label{prop:chisq-stability-implication}
	If the filter is stable in the sense of $\chisq$ then it is stable in KL divergence, total variation, and $L^2$.
\end{proposition}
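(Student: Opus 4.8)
The plan is to reduce the three claimed modes of stability to classical pointwise inequalities between $f$-divergences, applied $\sP^\mu$-almost surely to the random pair of probability measures $(\pi_T^\mu,\pi_T^\nu)$, and then to take $\E^\mu$-expectations. Note first that \Lemma{lm:change-of-Pmu-Pnu} provides $\pi_T^\mu\ll\pi_T^\nu$ $\sP^\mu$-a.s., so all the divergences below are well defined (possibly $+\infty$) as $\clZ_T$-measurable random variables; wherever a divergence equals $+\infty$ pointwise the corresponding inequality is trivial. For the KL part I would use the elementary bound, valid for any probability measures $P\ll Q$,
\[
\kl(P\mid Q) = \int \log\tfrac{\ud P}{\ud Q}\,\ud P \;\le\; \log\!\Big(\int \tfrac{\ud P}{\ud Q}\,\ud P\Big) \;=\; \log\big(1+\chisq(P\mid Q)\big) \;\le\; \chisq(P\mid Q),
\]
where the first inequality is Jensen's inequality for the concave function $\log$. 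Taking $P=\pi_T^\mu$, $Q=\pi_T^\nu$ and then $\E^\mu$, the hypothesis $\E^\mu\big(\chisq(\pi_T^\mu\mid\pi_T^\nu)\big)\to 0$ immediately yields $\E^\mu\big(\kl(\pi_T^\mu\mid\pi_T^\nu)\big)\to 0$.

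For total variation I would start from the Cauchy--Schwarz bound $\|P-Q\|_\tv = \tfrac12\int\big|\tfrac{\ud P}{\ud Q}-1\big|\,\ud Q \le \tfrac12\big(\chisq(P\mid Q)\big)^{1/2}$, apply it pointwise, and then pass the expectation through the square root by Jensen's inequality (concavity of $\sqrt{\,\cdot\,}$):
\[
\E^\mu\big(\|\pi_T^\mu-\pi_T^\nu\|_\tv\big) \;\le\; \tfrac12\,\E^\mu\Big(\big(\chisq(\pi_T^\mu\mid\pi_T^\nu)\big)^{1/2}\Big) \;\le\; \tfrac12\,\big(\E^\mu(\chisq(\pi_T^\mu\mid\pi_T^\nu))\big)^{1/2} \;\longrightarrow\; 0.
\]
For the $L^2$ part, fixing $f\in C_b(\bS)$ and writing $\pi_T^\mu(f)-\pi_T^\nu(f) = \int f\,\big(\tfrac{\ud\pi_T^\mu}{\ud\pi_T^\nu}-1\big)\,\ud\pi_T^\nu$, Cauchy--Schwarz gives the pointwise bound $|\pi_T^\mu(f)-\pi_T^\nu(f)|^2 \le \|f\|_\infty^2\,\chisq(\pi_T^\mu\mid\pi_T^\nu)$, and taking $\E^\mu$ completes the argument.

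I do not expect a genuine obstacle: the content is entirely the classical ordering of $f$-divergences (KL $\le \chi^2$; Pinsker / Cauchy--Schwarz for TV; Cauchy--Schwarz against a bounded test function for $L^2$), together with Jensen's inequality used in the direction that converts an expectation of a concave function into the concave function of the expectation. The only points requiring some care are bookkeeping: the divergences are random variables measurable with respect to $\clZ_T$; their a.s.\ finiteness (which makes the pointwise inequalities non-trivial) is supplied by \Lemma{lm:change-of-Pmu-Pnu}; and the outer expectation throughout is taken under $\sP^\mu$, consistently with Definition~\ref{def:filter-stability}. If helpful, the definitions of the $f$-divergences themselves can be recalled from \Appendix{ss:pf-prop71}, as already referenced in Definition~\ref{def:filter-stability}.
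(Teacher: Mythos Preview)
Your proposal is correct and follows essentially the same approach as the paper: both arguments reduce to the classical pointwise ordering of $f$-divergences ($\kl\le\chisq$, and a Cauchy--Schwarz/Pinsker bound on $\|\cdot\|_\tv$ in terms of $\chisq$) applied to $(\pi_T^\mu,\pi_T^\nu)$ and then averaged under $\sP^\mu$, with the $L^2$ part handled by Cauchy--Schwarz against a bounded test function. The only cosmetic differences are that the paper routes the total-variation bound through Pinsker's inequality rather than directly via Cauchy--Schwarz, and for the $L^2$ bound it centers $f$ to obtain the slightly sharper constant $\operatorname{osc}(f)/4$ in place of your $\|f\|_\infty^2$.
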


\begin{proof}  See Appendix~\ref{ss:pf-prop71}. 
\end{proof}

Because these were stated piecemeal, the main assumptions 
are stated formally as follows:

\newP{Assumption 0} Consider HMM $(\clA,h)$.  
\begin{enumerate}
\item $\mu,\nu\in\clP(\bS)$ are two priors with $\mu\ll\nu$. 
\item Novikov's condition holds:
\[
\E \left(\exp\big(\half \int_0^\tau |h(X_t)|^2\ud t\big)\right) < \infty
\] 
The condition holds, e.g., if $h\in C_b(\bS)$.  
\item The generator $\clA$ is for one of the two models introduced in Example~\ref{ex:state_processes}.
\end{enumerate}

\section{Main idea: Backward map and variance decay}\label{sec:backward_map}

Suppose $\mu\ll\nu$.  Denote 
\[
\gamma_T(x):=\frac{\ud \pi_T^\mu}{\ud
	\pi_T^\nu}(x),\quad x\in\bS
\]
It is well-defined because $\pi_T^\mu \ll \pi_T^\nu$
from Lemma~\ref{lm:change-of-Pmu-Pnu} (we adopt here the convention that $\frac{0}{0}=0$).  It is noted that while
$\gamma_0= \frac{\ud 
		\mu}{\ud \nu}$ is deterministic, $\gamma_T$ is a
$\clZ_T$-measurable function on $\bS$.  Both of these are examples of 
  likelihood ratio and referred to as such throughout the paper. 

A key original concept introduced in this paper is the \emph{backward map}
$\gamma_T\mapsto y_0$ defined as follows:
\begin{equation}\label{eq:bmap}
	y_{0}(x) :=  \E^\nu (\gamma_T(X_T)|[X_0=x]),\quad x\in\bS
\end{equation}
The function $y_0:\bS\to \Re$ is deterministic, non-negative, and 
$\nu(y_0) =  \E^\nu (\gamma_T(X_T)) = 1$, and therefore is also a
likelihood ratio. 
The significance of this map to the problem of filter stability comes from the 
following proposition:

\begin{proposition} \label{prop:var-decay-implies-filter-stability}
Consider the backward map $\gamma_T\mapsto y_0$ defined
by~\eqref{eq:bmap}.  Then
\begin{equation}\label{eq:chisq_identity_intro}
	|\E^\mu\big(\chi^2(\pi_T^\mu\mid\pi_T^\nu)\big)|^2\leq
\var^\nu (y_0(X_0)) \; \chisq(\mu | \nu)
\end{equation}
where $\var^\nu(y_0(X_0))=\E^\nu\big( |y_0(X_0)-1|^2\big)$. 
\end{proposition}

\begin{proof}
Since $\mu\ll\nu$, it follows $\mu(y_0) = \E^\mu\big(\gamma_T(X_T)\big)$. Using the tower property, 
\[
\mu(y_0) = \E^\mu\big(\gamma_T(X_T)\big) = \E^\mu\big(\pi_T^\mu(\gamma_T)\big) = \E^\mu\big(\pi_T^\nu(\gamma_T^2)\big)
\]
Noting that $\pi_T^\nu(\gamma_T^2)-1 = \chisq\big(\pi_T^\mu\mid\pi_T^\nu\big)$ is the $\chisq$-divergence, 
\[
\E^\mu\big(\chisq\big(\pi_T^\mu\mid\pi_T^\nu\big)\big) = \mu(y_0) -
\nu(y_0)
\]
Because 
$
\mu(y_0)-\nu(y_0) = \nu\big((\gamma_0-1)(y_0-1)\big)
$, 
upon using the Cauchy-Schwarz inequality
gives~\eqref{eq:chisq_identity_intro}. 
\end{proof}

From~\eqref{eq:chisq_identity_intro}, provided $\chisq(\mu\mid\nu)
<\infty$, 
a sufficient condition for filter stability is the following:
\begin{flalign}\label{eq:VDP}
	&\textbf{(variance decay prop.)}\quad  \var^\nu\big(y_0(X_0)\big)  \stackrel{(T\to\infty)}{\longrightarrow}
	0 &
\end{flalign}
Next, from~\eqref{eq:bmap}, $
(y_0(X_0) -1) =  \E^\nu \big((\gamma_T(X_T)-1)|X_0\big)$, 
and using
Jensen's inequality, 
\begin{equation}\label{eq:jensens_ineq}
	\var^\nu\big(y_0(X_0)\big) \le \var^\nu\big(\gamma_T(X_T)\big)
\end{equation}
where $\var^\nu(\gamma_T(X_T)):=\E^\nu\big(|\gamma_T(X_T)-1|^2\big)$. 
Therefore, the backward map $\gamma_T\mapsto y_0$ is non-expansive -- the
variance of the random variable $y_0(X_0)$ is smaller than the
variance of the random variable $\gamma_T(X_T)$.

In the remainder of this paper, we have two goals:
\begin{enumerate}
\item To express a stronger form of~\eqref{eq:jensens_ineq} such that
  the variance decay property~\eqref{eq:VDP} is deduced under a
  suitable definition of the Poincar\'e inequality (PI).
\item Relate PI to the model properties, namely,
  (i) ergodicity of the Markov process; and (ii) 
  observability of the HMM~$(\clA,h)$.
\end{enumerate}
Concerning these goals, the contributions of this paper are noted briefly as an aid to the reader.  These are as follows:
\begin{enumerate}
\item The stronger form of~\eqref{eq:jensens_ineq} is formula~\eqref{eq:var_contractive}.
\item Relationship of the PI to the HMM model properties is given in~\Prop{prop:sufficiency}.
\item Based on the use of the PI, the two main filter stability results are given in~\Thm{thm:general-nonexponential-case} and~\Thm{thm:finite-case}. 
\end{enumerate}

The following subsections are included to help relate the approach of this
paper to the
literature.  The reader may choose to skip ahead to
\Sec{sec:func_notn_assmp} without any loss of continuity. 

\subsection{Comparison to literature} \label{ssec:literature-comparison}

\begin{remark}[Contraction analysis] \label{rm:contraction-literature}
Based on~\eqref{eq:jensens_ineq}, the variance decay is a contraction property of the backward linear map $\gamma_T\mapsto
y_0$.  This nature of contraction analysis is contrasted with the contraction analysis of the random matrix
products arising from recursive application of the Bayes'
formula~\cite{atar1997lyapunov}~\cite[Ch. 4.3]{Moulines2006inference}. For
the HMM with white noise observations $(\clA,h)$,  the random linear
operator is the solution operator of the DMZ equation~\cite{atar1997exponential}.  An
early contribution on this theme appears in~\cite{delyon1988lyapunov},
which was expanded in~\cite{atar1997exponential,atar1997lyapunov,atar1998exponential}.
In these papers, the stability index is defined by
\[
\overline{\gamma}:=\limsup_{T\to\infty}\frac{1}{T}\log\|\pi_T^{\nu}-\pi_T^\mu\|_\tv
\]
If this value is negative then the filter is asymptotically stable in
total variation norm. Moreover, $-\overline{\gamma}$ represents the
rate of exponential convergence.  A summary of known bounds for
$\overline{\gamma}$ is given in Appendix~\ref{apdx:forward-map-rates} and compared to the
bounds obtained using the approach of this paper. 
\end{remark}

\begin{remark}[Forward map] \label{rm:forward-map} The backward
  map $\gamma_T\mapsto y_0$ is contrasted with the forward map
  $\gamma_0\mapsto \gamma_T$ defined as
  follows~\cite[Lemma 2.1]{clark1999relative}:
\begin{equation*}\label{eq:fmap}
	\gamma_T(x) = \E^\nu
	\Big( \frac{\gamma_0(X_0)}{\E^\nu(\gamma_0(X_0) \mid \clZ_T)}
        \,\bigg|\,\clZ_T\vee [X_T=x]\Big),\quad x\in \bS
\end{equation*}
The forward map is the starting point of the intrinsic (probabilistic)
approach to filter stability~\cite{chigansky2009intrinsic}.   
Both the forward and backward maps have as their domain and range the space of likelihood
ratios. 
While the
forward map is nonlinear and random, the  backward
map~\eqref{eq:bmap} is linear and deterministic.  

A marvelous
  success of the intrinsic approach is to establish filter
  stability in total
  variation for the ergodic signal
  case~\cite[Thm.~III.3]{van2010nonlinear} and a.s. for the
  observable case~\cite[Thm.~1]{van2009observability}.  
\end{remark}

\begin{remark}[Metrics for likelihood ratio] 
\label{rem:KL_divergence}
In an important early study, the following formula
for KL divergence (or relative entropy) is shown~\cite[Thm~2.2]{clark1999relative}:
\begin{align*}
	\kl(\mu\mid\nu) &\geq \E^\mu\big(\kl(\pi_t^\mu\mid\pi_t^\nu)\big) + 
	\kl\big(\sP^\mu|_{\clZ_t}\mid\sP^\nu|_{\clZ_t}\big),
\quad t> 0
\end{align*}
From this formula, a corollary is that
$\{\kl(\pi_t^\mu\mid\pi_t^\nu):t\geq 0\}$ is a
  non-negative $\sP^\mu$-super-martingale (assuming $\kl(\mu\mid\nu)<\infty$).  Therefore, the relative
  entropy is a Lyapunov function for the filter, in
  the sense that $\E^\mu\big(\kl(\pi_t^\mu\mid\pi_t^\nu)\big)$ is
  non-increasing as function of time.  However,
  it is difficult to establish conditions that show that~$\E^\mu\big(\kl(\pi_T^\mu\mid\pi_T^\nu)\big) \stackrel{(T\to\infty)}{\longrightarrow} 0$~\cite[Sec.~4.1]{chigansky2009intrinsic}.
For white noise
observations model~\eqref{eq:obs-model}, an explicit formula is obtained as
follows~\cite[Thm.~3.1]{clark1999relative}:
\[
\kl \big(\sP^\mu|_{\clZ_t}\mid\sP^\nu|_{\clZ_t}\big) = \half 
\E^\mu\Big(\int_0^t |\pi_s^\mu(h)-\pi_s^\nu(h)|^2 \ud s\Big)
\]
Therefore, $\E(|\pi_t^\mu(h)-\pi_t^\nu(h)|^2) \to 0$ which
shows that the
filter is \emph{always} stable for the observation function
$h(\cdot)$. A generalization is given in~\cite{chigansky2006role} where
it is proved that one-step predictive estimates of the observation
process are stable.  These early results served as the foundation for
the definition of stochastic observability introduced
in~\cite{van2009observability}.
\end{remark}

\subsection{Background  on PI for a Markov process}

To see the importance of PI in the study of Markov processes, let us consider
the $\chisq$-divergence with $h=c\ones$.  In this case,
the two processes $\{\pi_t^\mu:t\geq 0\}$ and $\{\pi_t^\nu:t\geq 0\}$
are both deterministic and a straightforward calculation (see
Appendix~\ref{apdx:chisq}) shows that
\begin{equation}\label{eq:rate-chisq-Markov}
\frac{\ud }{\ud t} \,\chisq(\pi_t^\mu\mid \pi_t^\nu)  = -\pi^\nu_t\big(\Gamma 
	\gamma_t\big)
\end{equation}
where $\Gamma$ is the so called carr\'e du champ operator.  Its formal
definition is as follows:

\begin{definition}[Defn.~1.4.1. in~\cite{bakry2013analysis}] The
  bilinear operator
\[
\Gamma(f, g)(x) := (\clA fg)(x) - f(x)(\clA g)(x) - g(x)(\clA f)(x),\;x\in\bS
\]
defined for every $(f,g)\in\clD \times \clD$ is called the carr\'e du
champ operator of the Markov generator $\clA$. Here, $\clD$ is a 
vector space of (test) functions that are dense in $L^2$, stable under
products (i.e., $\clD$ is an algebra), and $\Gamma:\clD\times
\clD\to \clD$ (i.e., $\Gamma$ maps two functions in $\clD$ into a
function in $\clD$), such that $\Gamma(f,f)\geq 0$ for every
$f\in\clD$~\cite[Defn.~3.1.1]{bakry2013analysis}. 
$(\Gamma f)(x):= \Gamma(f,f)(x)$.   
\end{definition}

\begin{example}[Continued from Ex.~\ref{ex:state_processes}] \label{ex:carre-du-champ}For the
  examples of the state processes in Ex.~\ref{ex:state_processes}],
  the carr\'e du champ operators are as follows:
\begin{itemize}
	\item $\bS = \{1,2,\ldots,d\}$. Then
         \begin{equation*}\label{eq:Gamma-finite}
 		(\Gamma f)(x) = \sum_{y \in \mathbb{S}} \clA(x,y) (f(x) - f(y))^2,\quad x\in\bS
 	\end{equation*}
for $f\in\clD=\Re^d$. The same definition also applies to discrete
state-spaces with countable cardinality. 
	\item $\bS=\Re^d$. For the It\^o diffusion~\eqref{eq:ito_diffusion}, the carr\'e du champ operator is given by 
	\begin{equation*}\label{eq:Gamma-Euclidean}
		(\Gamma f) (x) = \big|\sigma^\tp(x) \nabla f(x) \big|^2,\quad x\in\Re^d
	\end{equation*}
	for $f\in \clD=C^1(\Re^d;\Re)$.
\end{itemize}
\end{example}

Returning to~\eqref{eq:rate-chisq-Markov}, an important point to note
is that $\Gamma$ is positive-definite and thus the right-hand
side of~\eqref{eq:rate-chisq-Markov} is non-positive.  
This means $\chisq$-divergence is a candidate Lyapunov function.
To show $\chisq$-divergence asymptotically goes to zero requires
additional assumption on the model.  PI is one such assumption.  It is described
next.  

Suppose $\bmu\in\clP(\bS)$ is an invariant probability measure and
let $L^2(\bmu):=\{f:\bS\to \Re \mid \bmu(f^2) < \infty\}$.  
The Poincar\'e constant is defined as follows:
\[
c := \inf\{\bmu(\Gamma f) \; :\; f\in L^2(\bmu) \; \& \; \dvar^\bmu(f(X_0)) = 1\}
\]
When the Poincar\'e constant $c$ is strictly positive the resulting inequality is referred to as the
Poincar\'e inequality (PI):
	\begin{equation*}
		\text{(PI)}\qquad\qquad	\bmu(\Gamma f) \geq c \;
		\dvar^\bmu(f(X_0))\quad \forall\, f \in L^2(\bmu)
	\end{equation*}
The significance of the PI to the problem at hand is as follows: Set $\nu =
\bmu$.  Then $\gamma_t = \frac{\pi_t^\mu}{\bmu}$ and the differential equation~\eqref{eq:rate-chisq-Markov} for $\chisq$-divergence becomes 
	\begin{align*}
	\frac{\ud}{\ud t} \chisq(\pi_t^\mu\mid \bmu) & = - \bmu
        (\Gamma \gamma_t) \\ & \stackrel{\text{(PI)}}{\leq} -c \: \dvar^\bmu(\gamma_t
        (X_0))  = - c \: \chisq(\pi_t^\mu\mid\bmu) 
	\end{align*}
Therefore, provided $\chisq(\mu \mid\bmu)< \infty$, asymptotic
stability in the sense of $\chi^2$-divergence is shown (The
Poincar\'e constant $c$ gives the exponential rate of decay).

\begin{remark}
PI provides a natural definition for ergodicity of a continuous-time
Markov process.  The relationship between PI and the Lyapunov approach
of Meyn-Tweedie is described at length in~\cite{bakry2008rate}.
Specifically, it is shown that (i) existence of a positive Poincar\'e
constant is equivalent to exponential stability (in the sense of
$\E(f(X_t)) \to \bmu(f)$ for $f\in L^2(\bmu)$), and (ii) existence of
a Lyapunov function from Meyn-Tweedie theory implies a positive Poincar\'e constant~\cite[Thm.~4.6.2]{bakry2013analysis}. 
\end{remark}

A goal in this paper is to define an
appropriate notion of the PI for
the HMM $(\clA,h)$ and use it to show filter stability.

\section{Function spaces, notation, and duality}\label{sec:func_notn_assmp}

\subsection{Function spaces}

Let $\rho\in\clP(\bS)$ and $\tau>0$.  These are used to denote a
generic prior and a generic time-horizon $[0,\tau]$. (In the analysis
of filter stability, these are fixed to $\rho=\nu$ and $\tau=T$).  The
space of Borel-measurable deterministic functions is denoted
\[
L^2(\rho)=\{f:\bS\to \Re \;:\;\rho(f^2) = \int_\bS |f(x)|^2 \ud\rho (x) <\infty\}
\]

\newP{Background from nonlinear filtering} 
A standard approach 
is based upon the Girsanov change of measure. Because the Novikov's
condition holds, define a new measure $\tsP^\rho$
on $(\Omega,\clF_\tau)$ as follows:
\[
\frac{\ud \tsP^\rho}{\ud \sP^\rho} = \exp\Big(-\int_0^\tau
h^\tp(X_t) \ud W_t - \half \int_0^\tau |h(X_t)|^2\ud t\Big) =: D_\tau^{-1}
\]
Then the probability law for $X$ is unchanged but
$Z$ is a $\tsP^\rho$-B.M.~that is independent of $X$~\cite[Lem.~1.1.5]{van2006filtering}. The
expectation with respect to $\tsP^\rho$ is denoted by
$\tE^\rho(\cdot)$.  The unnormalized filter $\sigma_\tau^\rho(f) :=
\tE^\rho(D_\tau f(X_\tau)|\clZ_\tau)$ for $f\in C_b(\bS)$.  It is
called as such because $\pi_\tau^\rho(f) = \frac{\sigma_\tau^\rho(f)}{\sigma_\tau^\rho(\ones)}$.  The
measure-valued process $\{\sigma_t^\rho:0\leq t\leq \tau\}$ is the solution of
the DMZ equation.

\begin{table}[t]
	\centering
	\renewcommand{\arraystretch}{2.0}
	\caption{Hilbert space for $\Re^m$-valued signals.} \label{tb:signal_inner_product}
	\small
	\begin{tabular}{p{0.1\textwidth}p{0.2\textwidth}}
        Notation & Inner-product \\ 
\hline \hline
$\clU$ & $\langle U, \tilde{U} \rangle := \tE^\rho ( \int_0^\tau
         U_t^\tp \tilde{U}_t \ud t )$ \\ 
\hline
        \end{tabular}
\end{table}

There are two types of function spaces:

\newP{$\bullet$ Hilbert space for signal} $\clU$ is used to denote the
  Hilbert space of $\Re^m$-valued $\clZ$-adapted stochastic processes.
  It is defined as $\clU:=
L^2\big(\Omega\times[0,\tau];\clZ\otimes \clB([0,\tau]);\ud \tsP^\rho\ud
t\big)$ 
where $\clB([0,\tau])$ is the Borel sigma-algebra on $[0,\tau]$,
$\clZ\otimes \clB([0,\tau])$ is the product sigma-algebra and $\ud
\tsP^\rho\ud t$ denotes the product measure on
it~\cite[Ch.~5.1.1]{le2016brownian}. See
Table~\ref{tb:signal_inner_product} for notation and definition of the 
inner product.

\begin{table*}[t]
	\caption{Function space for dual state (Left: $\bS=\{1,2,\hdots,d\}$, Right: $\bS\subseteq\Re^d $)} \label{tb:function_spaces}
	\renewcommand{\arraystretch}{2.0}
	\begin{minipage}{.45\linewidth}
		\centering
		\small
		\begin{tabular}{p{0.18\textwidth}p{0.67\textwidth}}
			Notation & Inner-product \\ 
			\hline \hline
			$\clY$ & $ \lambda(fg) := \sum_{x\in\bS} \lambda(x)
			f(x) g(x)$ 
			\\ \hline
			$\mathbb{H}_\tau^\rho $ & $ \langle F, G \rangle  := \tE^\rho
			(\sigma_{\tau}^\rho (FG)) $ \\
			& $\qquad\quad= \tE^\rho ( \sum_{x\in\bS}
			\sigma_{\tau}^\rho(x) F(x) G(x) )$ \\ \hline
			$\mathbb{H}^\rho([0,\tau])$ & $\langle Y, \tilde{Y} \rangle :=\tE^\rho \left( \int_0^\tau 
			\sigma_{t}^\rho (Y_t \tilde{Y}_t) \ud t \right)$ \\
			\hline
		\end{tabular}
	\end{minipage}%
\begin{minipage}{.55\linewidth}
	\centering
	\small
	\begin{tabular}{p{0.15\textwidth}p{0.7\textwidth}}
		Notation & Inner-product \\ 
		\hline \hline
		$\clY$ & $ \langle f, g \rangle_\lambda :=  \int_{\Re^d} ( f(x) g(x) +
		Df(x)^\tp Dg(x) \ud \lambda(x)$ 
		\\ \hline
		$\mathbb{H}_\tau^\rho $ & $ \langle F, G \rangle  := \tE^\rho
		(\langle F, G \rangle_{\sigma_{\tau}^\rho} ) $ \\
		& $ \;\; = \tE^\rho \left( \int_{\Re^d} ( F(x) G(x) +
		DF(x)^\tp DG(x) )\ud \sigma_{\tau}^\rho (x) \right)$ \\ \hline
		$\mathbb{H}^\rho([0,\tau])$ & $\langle Y, \tilde{Y} \rangle :=\tE^\rho \left( \int_0^\tau 
		\langle Y_t \tilde{Y}_t \rangle_{\sigma_{t}^\rho} \ud t \right)$ \\
		\hline
	\end{tabular}	
\end{minipage}%
\end{table*}

\newP{$\bullet$ Hilbert space for the dual}  Formally, the ``dual'' is a function on the state-space.  The space of such functions is denoted as
$\clY$.  It is easiest to
describe the Hilbert space first for the case when
$\bS=\{1,2,\hdots,d\}$.  In this case, $\clY=\Re^d$ (See
Ex.~\ref{ex:state_processes}).  Related to the dual, two types of Hilbert spaces are of interest.  These are defined
as follows:
\begin{itemize}
\item Hilbert space of $\clZ_\tau$-measurable random functions:
\begin{align*}
\mathbb{H}_\tau^\rho & :=\{ F:\Omega\to \clY \; : \; F\in\clZ_\tau \;\;
                       \& \;\; \tE^\rho
  (\sigma_{\tau}^\rho (F^2)) < \infty\} 
\end{align*}
(This function space is important because 
the backward map~\eqref{eq:bmap} 
is a map from
$\gamma_T \in \mathbb{H}_T^\nu$ to $y_0\in L^2(\nu)$). 
\item Hilbert space of $\clY$-valued $\clZ$-adapted stochastic processes:
\begin{align*}
\mathbb{H}^\rho([0,\tau]) & :=\{ Y:\Omega\times [0,\tau] \to \clY  :
                            \;  Y_t\in\clZ_t, \;0\leq t\leq \tau, \\
& \qquad\qquad 
                       \& \;\; \tE^\rho \left( \int_0^\tau 
  \sigma_{t}^\rho (Y_t^2) \ud t \right) < \infty\}
\end{align*}
(This function space is important because we will embed the backward
map~\eqref{eq:bmap} $\gamma_T \mapsto y_0$  into a $\clY$-valued $\clZ$-adapted stochastic process
$Y=\{Y_t:\Omega\to \clY :0\leq t\leq T\}$ 
such that $Y_T=\gamma_T$ and $Y_0=y_0$).  
\end{itemize}
An extension of these definitions to the case where $\bS\subseteq\Re^d$ is described in the
following example.

\begin{example}[Continued from Ex.~\ref{ex:state_processes}] For the
  examples of the state processes in Ex.~\ref{ex:state_processes}],
  the examples of $\clY$ are as follows:
\begin{itemize}
\item $\bS=\{1,2,\hdots,d\}$.  $\clY=\Re^d$ as discussed above.
\item $\bS\subseteq\Re^d$. $\clY=W^{1,2}(\Re^d)$ is a
  Sobolev space.  
\end{itemize}
For these two examples, the definition of the inner-products for
$\mathbb{H}_\tau^\rho$ and $\mathbb{H}^\rho([0,\tau])$ appear as part of Table~\ref{tb:function_spaces}. 
\end{example}

\subsection{Notation} 
Let $\rho\in{\cal P}(\bS)$.  
For real-valued functions $f,g\in \clY $, $
\clV_t^\rho(f,g) := \pi_t^\rho\big((f-\pi_t^\rho(f))(g-\pi_t^\rho(g))\big)
$. 
With $f=g$, $\clV_t^\rho(f) := \clV_t^\rho(f,f)$.
At time $t=0$,
$\clV_0^\rho(f) = \rho(f^2)-\rho(f)^2 = \E^\rho(|f(X_0)-\rho(f)|^2)=
\var^\rho(f(X_0))$.
In literature, $\clV_0^\rho(f)$ has been denoted as
``$\var^\rho(f)$'' and 
referred to as the ``variance of the function
$f$ with respect to $\rho$''~\cite[Eq.~(4.2.1)]{bakry2013analysis}.  In this paper, we will
instead adopt a more conventional terminology whereby the argument of
$\var^\rho(\cdot)$ is always a random variable.
Likewise, $\clV_t^\rho(f,g)$ is (related to) the conditional covariance
because $\clV_t^\rho(f,g)=\E^\rho((f(X_t)-\pi_t^\rho(f))
(g(X_t)-\pi_t^\rho(g))|\clZ_t)$, and $\clV_t^\rho(f)$ is the
conditional variance of $f(X_t)$.

Apart from real-valued functions, it is also necessary to consider
$\Re^m$-valued functions.  The space of such functions is denoted
$\clY^m$.  Let $v\in \clY^m$. For each $x\in\bS$, $v(x)$ is
a column vector $v(x)=[v^1(x),\ldots,v^m(x)]^\tp$ where $v^j \in
\clY$ for $j=1,2,\hdots,m$.   For $v,\tilde{v} \in \clY^m$, 
$\clV_t^\rho(v,\tilde{v}) := \pi_t^\rho\big((v-\pi_t^\rho(v))^\tp(\tilde{v}-\pi_t^\rho(\tilde{v}))\big)
$ and $\clV_t^\rho(v) := \clV_t^\rho(v,v) =
\pi_t^\rho\big(|v-\pi_t^\rho(v)|^2\big)$.  

For $f\in\clY$ and $v\in\clY^m$, $\clV_t^\rho(f,v) =
\pi_t^\rho \big( (f - \pi_t^\rho(f))(v-\pi_t^\rho(v)) \big):=
[\clV_t^\rho(f,v^1),\ldots,\clV_t^\rho(f,v^m)]^\tp$.

The space of likelihood
ratio is denoted by 
\begin{align*}
\clL^\rho_\tau:=\big\{F\in
\mathbb{H}_\tau^\rho : F(x)\geq 0, \;x\in\bS  \;\;\&\;\;
\pi^\rho_\tau(F) = 1,\;\sP^\rho\text{-a.s.} &\big\} 
\end{align*}

\subsection{Duality: Optimal control problem}

Our goal is to embed the backward
map~\eqref{eq:bmap} $\gamma_T \mapsto y_0$ into a continuous-time 
backward process.  For this purpose, the following dual optimal control problem
is considered.  The problem was
previously introduced by us in~\cite{duality_jrnl_paper_II}.
(Additional motivation is provided in Remark~\ref{rem:mot_OCP}).

\newP{Dual optimal control problem}
\begin{subequations}\label{eq:dual-optimal-control}
	\begin{align}
		&           \mathop{\text{Min:}}_{U\in\;\clU}\; \sJ_\tau^\rho(U)  =
		\var^\rho(Y_0(X_0)) + 
		\E^\rho \Big(\int_0^\tau l (Y_t,V_t,U_t\,;X_t) \ud t \Big)\label{eq:dual-optimal-control-a}\\
		&          \text{Subject to (BSDE constraint):} \nonumber \\ 
		&  -\!\ud Y_t(x) = \big((\clA Y_t)(x) + h^\tp (x) (U_t +
		V_t(x))\big)\ud t - V_t^\tp(x)\ud Z_t \nonumber \\
		&    \quad \;  Y_\tau (x)  = F(x), \;\; x \in \bS 
		\label{eq:dual-optimal-control-b}
	\end{align}
\end{subequations}
where $(Y,V) \in \mathbb{H}^\rho([0,\tau])\times \mathbb{H}^\rho([0,\tau])^m$ is the solution of~\eqref{eq:dual-optimal-control-b} for a
given $F\in \mathbb{H}_\tau^\rho $ and $U\in \clU$, 
and the running cost 
\begin{equation*}\label{eq:running_cost_formula}
	l(y,v,u;x)= (\Gamma y)(x) + |u+v(x)|^2
\end{equation*}
for $y\in\clY,v\in\clY^m,u\in\Re^m,x\in\bS$. (If $\bS$ is finite,
$\clY=\Re^d$).

The solution to~\eqref{eq:dual-optimal-control} and its relationship to the optimal filter is given in the following
theorem:

\begin{theorem}\label{thm:optimal-solution}
	Consider the optimal control problem~\eqref{eq:dual-optimal-control}.  
	The optimal control is of the feedback form given by
	\begin{equation}\label{eq:optimal_control_formula}
		U_t = U_t^\opt := -\clV_t^\rho(h,Y_t)-
		\pi_t^\rho(V_t),\;\;\sP^\rho\text{-a.s.},\quad 0\leq
		t\leq \tau
	\end{equation}
	Suppose $(Y,V)=\{(Y_t,V_t):0\leq t\leq \tau\}$ is the
	associated solution of the
	BSDE~\eqref{eq:dual-optimal-control-b}.  Then
	
	\noindent $\bullet$ For almost every
	$0\le t \le \tau$,
	\begin{subequations}
		\begin{align}
			&	\pi_t^\rho (Y_t) = \rho (Y_0) - \int_0^t \big(U_s^\opt \big)^\tp\ud Z_s,\quad \sP^\rho\text{-a.s.} \label{eq:estimator-t} \\
			&	\E^\rho\big(\clV_t^\rho (Y_t)\big) = 
			\clV_0^\rho (Y_0)
			+ \E^\rho\Big(\int_0^t l(Y_s,V_s,U_s^\opt;X_s)\ud s\Big)  \label{eq:estimator-t-variance}
		\end{align}
	\end{subequations}
		
		\noindent $\bullet$ Define a real-valued 
		$\clZ$-adapted process $M:=\{M_t:0\leq t\leq \tau\}$ as follows:
		\begin{equation}\label{eq:martingale}
			M_t := \clV_t^\rho (Y_t) - \int_0^t
			\E^\rho \big(\ell (Y_s,V_s,U_s^{\text{opt}}\,;X_s)|\clZ_s\big) \ud s
		\end{equation}
		Then $M$ is a $\sP^{\rho}$-martingale.
		
		\noindent $\bullet$ For $f\in\clY$,
		\begin{align}
			&	\ud \clV_t^\rho(f,Y_t)
			=\Big(\pi_t^\rho\big(\Gamma(f,Y_t)\big) + \clV_t^\rho(\clA f,Y_t)
			\Big)\ud t \nonumber\\
			& \quad + \Big(\clV_t^\rho\big( (f-\pi_t^\rho(f))(h-\pi_t^\rho(h)),Y_t\big)  + \clV_t^\rho(f,V_t) \Big)^\tp \ud
			I_t^\rho \label{eq:variance_Yt_f}
		\end{align}
	where $I_t^\rho := Z_t - \int_0^t \pi_s^\rho(h) \ud s$.
\end{theorem}

\begin{proof}
	The feedback control formula is given
	in~\cite[Thm.~3]{duality_jrnl_paper_II}. The equations for conditional
	mean and variance are in~\cite[Prop.~1]{duality_jrnl_paper_II}.
	The martingale characterization appears in~\cite[Thm.~3]{duality_jrnl_paper_II}.  
	In
	the form presented here, the SDE~\eqref{eq:variance_Yt_f} for the
	conditional variance is new (the form is used in the proof of the
	main result).  It is easily derived from the Hamilton's
	equation~\cite[Thm.~2]{duality_jrnl_paper_II} for the optimal control
	problem~\eqref{eq:dual-optimal-control}.
	The derivation is included in Appendix~\ref{apdx:pf-thm1}.  
\end{proof}

\begin{example}[Continued from Ex.~\ref{ex:state_processes}] 
There is a well developed theory for existence, uniqueness and
regularity of the solutions of the BSDE~\eqref{eq:dual-optimal-control-b}.  For the two state processes
of interest, the theory can be found in the following:
\begin{itemize}
\item $\bS=\{1,2,\hdots,d\}$ and $\clY=\Re^d$. See~\cite[Ch.~7]{yong1999stochastic}.
\item $\bS=\Re^d$ and
  $\clY=W^{1,2}(\Re^d)$. See~\cite[Thm.~3.2.]{ma1997adapted} where
  additional assumptions on the model are stated for these results to hold.
\end{itemize}
\end{example}

\begin{remark}\label{rem:mot_OCP}
	The optimal control problem~\eqref{eq:dual-optimal-control} is a
	generalization of the classical minimum variance duality to the HMM
	$(\clA,h)$ (See~\cite{duality_jrnl_paper_II} where historical context
	is provided).
	Formula~\eqref{eq:estimator-t}  gives the filter in terms of the
	solution of this problem.  The idea of this paper is to obtain
	conclusions on the asymptotic stability of the filter based on
	analysis of the optimal control system.  
	This is possible because of the relationship of the optimal control system to the backward map~\eqref{eq:bmap} as described next.
\end{remark}

\subsection{Relationship to the backward map~\eqref{eq:bmap}}

Recall the backward map~\eqref{eq:bmap} $\gamma_T\mapsto
y_0$ introduced in \Sec{sec:backward_map}.  The following relates it to the optimal control system.

\begin{proposition}\label{prop:backward-map-and-bsde}
	Consider the optimal control
	problem~\eqref{eq:dual-optimal-control} with $\rho=\nu$,
	$\tau=T$, and the terminal condition $Y_T=F=\gamma_T$.  Then at
	time $t=0$,
	\begin{equation*}\label{eq:bmap-and-bsde}
		Y_0(x)=y_0(x) ,\quad x \in \bS
	\end{equation*}
	where $y_0$ is according to the backward map~\eqref{eq:bmap}. For almost every $0\le t \le T$:
	\begin{enumerate}
		\item The optimal control $U_t^\opt=0$, $\sP^\nu$-a.s.. 
		\item The optimal state $Y_t \in\clL_t^\nu$ (i.e., $Y_t$ is a
		likelihood ratio). 
		\item The martingale~\eqref{eq:martingale} becomes
		\begin{equation}\label{eq:martingale_with_U0}
		M_t = \clV_t^\nu(Y_t) - \int_0^t \pi_s^\nu(\Gamma Y_s) + \pi_s^\nu\big(|V_s|^2\big) \ud s,\;\;\sP^\nu\text{-a.s.}
		\end{equation}
	\end{enumerate}
\end{proposition}

\begin{proof}
	See Appendix~\ref{apdx:pf-backward-map-and-bsde}.
\end{proof}

\begin{remark}[Variance decay and filter stability]
	\label{rem:cPI}
	The most direct route is to consider a functional inequality as follows:
		\begin{equation}\label{eq:alpha-def}
			\pi_t^\nu(\Gamma Y_t) + \pi_t^\nu(|V_t|^2)
			\ge \alpha_t \clV_t^\nu(Y_t),\; 0\leq t\leq T, \; \sP^\nu\text{-a.s.}
		\end{equation}
		where $\alpha=\{\alpha_t: 0\le t \le T\}$ is a non-negative $\clZ$-adapted process (such a process always exists, e.g., pick  $\alpha = 0$).  The advantage of introducing such a process is the
		following variance decay formula which first appeared in~\cite[Eq.~(8)]{kim2021ergodic} (formula reduces to~\eqref{eq:jensens_ineq} for the choice $\alpha=0$):
		\begin{equation}\label{eq:var_deccay_mg}
			\var^\nu(Y_0(X_0))\leq \E^\nu \left(e^{-\int_0^T\alpha_t \ud t}
			\clV_T^\nu(\gamma_T)\right)
		\end{equation}
	(If~\eqref{eq:alpha-def} holds with equality then so does~\eqref{eq:var_deccay_mg}). Based on the formula~\eqref{eq:var_deccay_mg}, a sufficient condition to show filter stability is to assume $	
	\frac{1}{T} \int_0^T \alpha_t\ud t > c$, $\sP^\nu\text{-a.s.}
	$. Then it is straightforward to show that (see~\cite[Thm.~1]{kim2021ergodic})
	\begin{equation}\label{eq:exp_stability_formula_0}
	\E^\mu\big(\chisq(\pi_T^\mu\mid\pi_T^\nu)\big) \le
	\frac{1}{\underline{a}} e^{-cT} \; \chisq(\mu\mid\nu)
	\end{equation}
	where $\underline{a}:=
	\mathop{\operatorname{essinf}}_{x\in\bS} \gamma_0 (x)$.
	
	While the variance decay formula~\eqref{eq:var_deccay_mg} is attractive, it has been difficult to relate positivity of $\alpha$ to the model properties of the HMM, outside a few special examples described in our prior conference paper~\cite{kim2021ergodic}. A summary of these examples appears in Appendix~\ref{apdx:forward-map-rates} with details in~\cite{kim2021ergodic}.
\end{remark}

\section{Poincar\'e Inequality and filter stability}\label{sec:dual-optimal-ctrl}

\subsection{Poincar\'e inequality (PI) for the filter}

The optimal control system is the
BSDE~\eqref{eq:dual-optimal-control-b} with $U_t$
defined according to optimal the feedback control
law~\eqref{eq:optimal_control_formula}. 

\newP{Optimal control system}
\begin{align}
 -\!\ud Y_t(x) &= \big((\clA Y_t)(x) -  h^\tp (x) \clV_t^\rho(h,Y_t)
                 \nonumber \\ &\qquad + h^\tp (x)(
		V_t(x) -
                \pi_t^\rho(V_t))\big)\ud t - V_t^\tp(x)\ud Z_t \nonumber \\
		 Y_\tau (x)  & = F(x), \;\; x \in \bS, \;\;0\leq t\leq \tau \label{eq:optimal_control_system}
\end{align}

Using the formula~\eqref{eq:optimal_control_formula} for the optimal
control,
\begin{align*}
&\E^\rho(l(Y_t,V_t,U_t^\opt;X_t)|\clZ_t) \nonumber \\
&\qquad = \pi_t^\rho(\Gamma Y_t) + |\clV_t^\rho(h,Y_t)|^2 +
 \clV_t^\rho(V_t),  \quad 0\leq t\leq \tau
\end{align*}
The right-hand side is referred to as the 
  conditional energy.  To define the notion of energy and the
  Poincar\'e constant for the filter, first denote 
\[
\clN:= \big\{\rho\in\clP(\bS): \VV=0 \;\;\forall F\;\in \mathbb{H}_\tau^\rho\big\}
\]

\begin{definition}
Consider~\eqref{eq:optimal_control_system}.  {\em Energy} is defined as follows:
\begin{align*}
\II^\rho(F)&:=\E^\rho \left( \int_0^\tau \pi_t^\rho(\Gamma Y_t) + |\clV_t^\rho(h,Y_t)|^2 +\clV_t^\rho(V_t)  \ud t \right)
\end{align*}
For $\rho\in\clP(\bS)\setminus \clN$, consider 
\[
\beta_\tau^\rho:= \inf\big\{\II^\rho(F) \; : \;F\in \mathbb{H}_\tau^\rho \;\; \& \;\;\VV=1\big\}
\]
and the {\em Poincar\'e constant} is defined as follows:
\[
c^\rho:=\begin{cases}
	\dfrac{1}{\tau}\log\big(1+\beta_\tau^\rho\big),\quad &\rho\in\clP(\bS)\setminus \clN\\
	0,\quad & \rho\in\clN
\end{cases}
\]
\end{definition}

\begin{remark}
The reason for defining the Poincar\'e constant in this manner is that $c^\rho$ then represents a rate.  In particular, using~\eqref{eq:estimator-t-variance}, for each $\rho \in \clP(\bS)\setminus \clN$,
\[
\VV \le e^{-\tau c^\rho} \E^\rho\big(\clV_\tau^\rho(F)\big),\quad \forall\, F\in \mathbb{H}_\tau^\rho 
\]
\end{remark}

\subsection{Analysis of the Poincar\'e constant}

We are interested in existence of the minimizer of
the energy functional $\II^\rho(F)$ for $F\in \mathbb{H}_\tau^\rho$.  If it exists, a minimizer is not unique
because of the following translation symmetry:
\[
\II^\rho(F+ \alpha\ones) = \II^\rho (F)
\]
for any $\clZ_\tau$-measurable random variable $\alpha$ such that
$\tE^\rho (\alpha^2)<\infty$.
For this reason, consider the subspace
\[
{\cal S}^\rho := \{F\in \mathbb{H}_\tau^\rho \;:\;
\pi_\tau^\rho(F)=0, \;\; \sP^\rho-a.s.\}
\]
Then $\cal S^\rho$ is closed subspace. (Suppose $F^{(n)} \to F$ in
$\mathbb{H}_\tau^\rho$ with $\pi^\rho_\tau\big(F^{(n)}\big) = 0$. Then   
$\E^\rho\big(|\pi^\rho_\tau(F)|\big) = \E^\rho\big(|\pi^\rho_\tau(F-F^{(n)})|\big) \leq
\E^\rho\big(\pi^\rho_\tau(|F-F^{(n)}|^2)\big) = \tE^\rho\big(\sigma_\tau^\rho
(|F-F^{(n)}|^2)\big) = \| F-F^{(n)} \|_{\mathbb{H}_\tau^\rho}\to 0$.). 

\begin{proposition}\label{prop:PI_on_S}
Consider the optimal control
        problem~\eqref{eq:dual-optimal-control} with $F\in{\cal S}^\rho$.
        Then
\begin{enumerate}
\item The optimal control $U^\opt=0$. 
\item At time $t=0$, $\rho(Y_0)=0$.
\end{enumerate}
\end{proposition}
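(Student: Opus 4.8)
The plan is to read off both claims from the explicit formulas in \Theorem{thm:optimal-solution}, specialized to $\rho$ and to the terminal condition $F\in{\cal S}$. The starting point is identity~\eqref{eq:estimator-t}, which gives
\[
\pi_t^\rho(Y_t) = \rho(Y_0) - \int_0^t (U_s^\opt)^\tp \ud Z_s, \qquad 0\le t\le \tau,
\]
$\sP^\rho$-a.s., with $U^\opt$ the optimal feedback control~\eqref{eq:optimal_control_formula}. Since $U^\opt\in\clU$ (it is the minimizer, so $\sJ_\tau^\rho(U^\opt)<\infty$), the stochastic integral is well defined, and passing to the equivalent measure $\tsP^\rho$---under which $Z$ is a Brownian motion---the process $M_t:=\pi_t^\rho(Y_t)$ becomes a square-integrable $\clZ$-martingale started at the deterministic value $M_0=\rho(Y_0)$.

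Next I would evaluate $M$ at the horizon. The terminal condition of the BSDE~\eqref{eq:dual-optimal-control-b} is $Y_\tau=F$, and $F\in{\cal S}$ means $\pi_\tau^\rho(F)=0$ $\sP^\rho$-a.s.; because the Girsanov density $D_\tau^{-1}$ is strictly positive, $\sP^\rho$ and $\tsP^\rho$ are equivalent on $\clF_\tau$, so $M_\tau=\pi_\tau^\rho(F)=0$ also $\tsP^\rho$-a.s. Applying the It\^o isometry to the displayed identity, and using that the stochastic integral has zero $\tsP^\rho$-mean,
\[
0 = \tE^\rho\big(M_\tau^2\big) = \big(\rho(Y_0)\big)^2 + \tE^\rho\Big(\int_0^\tau |U_t^\opt|^2\,\ud t\Big).
\]
Both summands are non-negative, hence both vanish: $\rho(Y_0)=0$, which is claim (2); and $\int_0^\tau |U_t^\opt|^2\ud t = 0$ $\tsP^\rho$- (equivalently $\sP^\rho$-) a.s., so $U_t^\opt=0$ for Lebesgue-almost every $t\in[0,\tau]$, which is claim (1).

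I do not expect a real obstacle; the only points needing care are (i) that the feedback control~\eqref{eq:optimal_control_formula} indeed lies in $\clU$ so that the It\^o isometry applies---this is part of the content of \Theorem{thm:optimal-solution}---and (ii) the transfer of the $\sP^\rho$-a.s. identity $\pi_\tau^\rho(F)=0$ to $\tsP^\rho$, which is immediate from equivalence of the two measures on $\clF_\tau$. A variant that avoids the isometry: a martingale with $M_\tau=0$ satisfies $M_t=\tE^\rho(M_\tau\mid\clZ_t)=0$ for all $t\le\tau$, so $\pi_t^\rho(Y_t)\equiv 0$; then $\rho(Y_0)=\pi_0^\rho(Y_0)=0$ and, since $\langle M\rangle_\tau=\int_0^\tau|U_t^\opt|^2\ud t=0$, also $U^\opt\equiv 0$.
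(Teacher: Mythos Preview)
Your proof is correct and takes essentially the same approach as the paper: both start from identity~\eqref{eq:estimator-t}, pass to the reference measure $\tsP^\rho$ under which $Z$ is a Brownian motion, and then use that a deterministic constant (here $0$) admits only the trivial It\^o representation to conclude $U^\opt=0$ and $\rho(Y_0)=0$. The only cosmetic difference is that the paper invokes ``uniqueness of the It\^o representation'' directly, whereas you spell this out via the It\^o isometry (or, in your variant, via the martingale property and quadratic variation); these are equivalent formulations of the same fact.
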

\begin{proof}
See Appendix~\ref{apdx:pf-backward-map-and-bsde}.
\end{proof}

Therefore, with $F\in{\cal S}^\rho$, the optimal control system~\eqref{eq:optimal_control_system}  becomes 
\begin{align}
	-\ud Y_t(x) &= \big((\clA Y_t)(x) + h^\tp(x)V_t(x)\big)\ud t -
                      V_t^\tp(x) \ud Z_t, \nonumber\\
	\quad Y_T &= F \in {\cal S}^\rho,\;\; x\in\bS, \;\;0\leq t\leq \tau \label{eq:optimal_control_system_on_S}
\end{align}

Its solution is used to define a linear operator as follows:
\[
\LL_0: {\cal S}^\rho \subset 
\mathbb{H}_\tau^\rho \to  L^2(\rho) \quad \text{by}\quad \LL_0(F) := Y_0
\]
(It is noted that~\eqref{eq:optimal_control_system_on_S} and therefore $\LL_0$ do not depend upon $\rho$ even though
the optimal control system~\eqref{eq:optimal_control_system} does). 
Additional details concerning this operator appear in
Appendix~\ref{apdx:crho_positive} where it is shown that $\LL_0$ is bounded
with $\|\LL_0\|\leq 1$. 

The following Lemma is the main result concerning existence and continuity properties:

\begin{lemma}\label{prop:crho_positive}
Let $\rho \in{\cal P}(\bS) \setminus \clN$. 
Suppose $\LL_0$ is compact.  Then
there exists an $F^\rho \in {\cal S}^\rho $ such that 
\[
\II^\rho(F^\rho) = \beta_\tau^\rho,\;\; \VV=1\;\;\text{where}\;\;Y_0 = \LL_0(F^\rho)
\]
Consider a sequence $\{\rho^{(n)}\in\clP(\bS):n=1,2,\ldots\}$ such that $\rho^{(n)}\ll \rho$. Denote $\gamma^{(n)} := \frac{\ud \rho^{(n)}}{\ud \rho}$ and let $\epsilon_n := \sup_{x\in\bS}|\gamma^{(n)}(x)-1|$. Then
\[
	\lim_{\epsilon_n\to 0}\beta_\tau^{\rho^{(n)}} = \beta_\tau^\rho,\quad \lim_{\epsilon_n\to 0}c^{\rho^{(n)}} = c^\rho
\]  
\end{lemma}

\medskip

\begin{proof}
See Appendix~\ref{apdx:crho_positive}.
\end{proof}

\medskip

\begin{example}[Continued from Ex.~\ref{ex:state_processes}] For the
  examples of the state processes in Ex.~\ref{ex:state_processes}:
\begin{itemize}
\item $\bS=\{1,2,\hdots,d\}$.  $\LL_0$ is compact because $\clY=\Re^d$ is
  finite-dimensional (closed and bounded sets in $\Re^d$ are compact). 
\item $\bS\subseteq \Re^d$. It is conjectured that $\LL_0$ is
  compact whenever $\bS$ is compact subset of $\Re^d$.
\end{itemize}
\end{example}

\medskip

\begin{remark}\label{rem:continuity}
Let $\bS=\{1,2,\hdots,d\}$. It is shown in Appendix~\ref{apdx:pf-finite} that $\clN=\{\delta_s:s\in\bS\}$, the set of $d$ Dirac delta measures ($d$ vertices of the probability simplex $\clP(\bS)\subset \Re^d$).  Combining this with the limit formula in Lemma~\ref{prop:crho_positive} shows that the map $\rho\mapsto c^\rho$ is continuous at all points $\rho$ in the interior of $\clP(\bS)$.  This is because each such $\rho$ admits a neighborhood such that all points in the neighborhood are absolutely continuous with respect to $\rho$.  However, nothing can be said about continuity at the boundary points. 
\end{remark}

\subsection{Main results on variance decay and filter stability}

Fix $\tau>0$. The $\tau$-skeleton of $\{\pi^\nu_t:t\geq 0\}$ is a measure-valued random sequence $\{\pi_{k\tau}^\nu:k=0,1,2,\hdots\}$.  The associated Poincar\'e constants for the skeleton is a real-valued random sequence $\{c^{\pi_{k\tau}^\nu}:k=0,1,2,\hdots\}$.  Define
\[
C_N:=\sum_{k=0}^{N-1} c^{\pi_{k\tau}^\nu},\quad N=1,2,\hdots
\]
The following proposition is the main result that gives the stronger form of the inequality~\eqref{eq:jensens_ineq}.
\begin{proposition}[Variance decay]\label{prop:var_contractive}
	Consider the backward map~\eqref{eq:bmap}.  Then
	\begin{equation}\label{eq:var_contractive}
		\var^\nu(y_0(X_0)) \leq \E^\nu \left( e^{-\tau C_N} \clV_T^\nu(\gamma_T) \right)
		,\quad \forall \;T \ge 0
	\end{equation}
	where $N={\lfloor T/\tau\rfloor}$.
\end{proposition}

\begin{proof}
	See Appendix~\ref{apdx:pf-var_contractive}. 
\end{proof}

Because $\{C_N:N=1,2,\hdots\}$ is non-negative and monotone, define
\[
C_{\infty}(\omega) :=\lim_{N\to\infty} \uparrow C_N(\omega),\quad \omega\in\Omega
\]
where the limit may possibly be $+\infty$.  Based on this definition, the following is the main result on filter stability:

\begin{theorem}[Filter stability]\label{thm:general-nonexponential-case} Suppose $\{\clV_T^\nu(\gamma_T):T\geq 0\}$ is $\sP^\nu$-u.i. and $c^\rho:\clP(\bS)\setminus \clN\to\Re$ is continuous.  Then 
	\begin{enumerate}
		\item[(i)] Either $\sP^\nu([C_{\infty} = \infty]) = 1$, in which case the variance decay property~\eqref{eq:VDP} holds and the filter is stable in $\chisq$-divergence; or
		\item[(ii)] $\sP^\nu([C_{\infty} = \infty]) < 1$, in which case 
		\[
		c^{\pi_T^\nu(\omega)} \stackrel{(T\to\infty)}{\longrightarrow} 0,\quad \sP^\nu\text{-a.e.}\;\omega \in  [C_{\infty}< \infty] 
		\]
	\end{enumerate}
\end{theorem}

\begin{proof}
	See Appendix~\ref{apdx:pf-general}.
\end{proof}

\begin{remark}[Exponential rate]
Let 
\[
  {c}:=\inf \{c^\rho: \rho \in{\cal P}(\bS)\setminus \clN\}
  \] Then it is shown in Appendix~\ref{apdx:pf-var_contractive} (compare with the formula~\eqref{eq:exp_stability_formula_0} in Rem.~\ref{rem:cPI}) that
	\begin{equation}\label{eq:exp_stability_formula}
	\E^\mu\big(\chisq(\pi_T^\mu\mid\pi_T^\nu)\big) \le
	\frac{1}{\underline{a}} e^{-{c}\,(T-\tau)} \;
	\chisq(\mu\mid\nu)
	\end{equation}
        where $\underline{a}=
	\mathop{\operatorname{essinf}}_{x\in\bS} \gamma_0 (x)$.
\end{remark}

\section{Relationship of PI to the model properties}\label{sec:model_prop}

The main task now is to relate the PI to the model properties.  In large part, this program still needs to be carried out.  In this section, some results are described for the finite state-space HMM.

\newP{Assumption 1}  The state-space is finite:
\begin{align*}
	\textbf{(A1)}\qquad	\bS = \{1,2,\hdots,d\}
\end{align*}

\subsection{PI for finite state-space HMM}

We begin with some definitions.  Additional motivation for these can be found in~\cite{kim2021detectable} and~\cite[Ch.~8]{JinPhDthesis}. 
\begin{definition} \label{def:obsvbl} The space of
	{\em observable functions} is the smallest subspace $\clO\subset
	\Re^d$ that satisfies the following two properties:  
	\begin{enumerate}
		\item[(i)] The constant function $\ones\in \clO$; and
		\item[(ii)] If $g\in\clO$ then $\clA g \in \clO$ and $g h
		\in\clO$. 
	\end{enumerate}
	The space of {\em null eigenfunctions} is 
	\[
	S_0 := \{f \in \Re^d \mid \; \Gamma f(x) = 0 \;\;\forall \; x\in\bS \} 
	\]
\end{definition}

These subspaces are useful to define the pertinent model properties for the finite-state HMM as follows:

\begin{definition}\label{def:ergodic}
	\begin{enumerate}
		\item HMM $(\clA,h)$ is {\em observable} if $\clO = \Re^d$. 
		\item The Markov process $\clA$ is {\em ergodic} if
		\begin{equation*}\label{eq:PI_MP_0}
			\Gamma f(x) =0,\;\;\forall \; x\in\bS \implies f(x)= c,\;\;\forall \; x\in\bS
		\end{equation*}
		\item HMM $(\clA,h)$ is {\em detectable} if
		$
		S_0 \subset \clO
		$. 
	\end{enumerate}
\end{definition}

\begin{example}
Consider an HMM on $\bS=\{1,2\}$ with 
\begin{equation*}
\clA = \begin{bmatrix} -\lambda_{12} & \lambda_{12} \\ \lambda_{21} &    -\lambda_{21} \end{bmatrix},\quad
h =  \begin{bmatrix} h(1) \\
  h(2) \end{bmatrix}
\end{equation*}
For this model, the carr\'e du champ operator and the observable space are as follows:
\[
\Gamma f = \begin{bmatrix} \lambda_{12} \\
    \lambda_{21}\end{bmatrix} (f(1)-f(2))^2,\quad \clO = \text{span} \left\{ \begin{bmatrix} 1 \\ 1 \end{bmatrix}, \begin{bmatrix} h(1)\\h(2) \end{bmatrix}\right\}
\]
Consequently,
\begin{enumerate}
\item $\clA$ is 
ergodic iff $(\lambda_{12}+\lambda_{21})>0$.  In this case,
the invariant measure $\bmu = \begin{bmatrix}
  \frac{\lambda_{21}}{(\lambda_{12}+\lambda_{21})} & 
  \frac{\lambda_{12}}{(\lambda_{12}+\lambda_{21})} \end{bmatrix}^\transpose$. 
\item $(\clA,h)$ is  is observable iff $h(1)\neq h(2)$. 
\end{enumerate}
\end{example}

The following proposition gives the relationship between the model properties for finite state-space HMM and the PI.  

\begin{proposition}\label{prop:sufficiency}
	Suppose $\rho\in
	\clP(\bS) \setminus \clN$, and any of
	the following conditions holds:
	\begin{enumerate}
		\item[(i)] $\clA$ is ergodic.
		\item[(ii)] $(\clA,h)$ is observable.
		\item[(iii)] $(\clA,h)$ is detectable.
	\end{enumerate}
	Then $c^\rho>0$.
\end{proposition}

\begin{proof}
	See Appendix~\ref{apdx:pf-main-results-1}.
\end{proof}

\subsection{Filter stability for finite state-space HMM}

\newP{Assumption 2}  The measures $\mu\sim\nu$ (are equivalent) with 
\[
\textbf{(A2)} \qquad 0< \underline{a} := \min_{x\in\bS} \gamma_0(x) \leq  \max_{x\in\bS} \gamma_0(x) =: \bar{a} < \infty
\]

\begin{theorem}\label{thm:finite-case}
  Suppose (A1)-(A2) holds and $(\clA,h)$ is detectable.  Suppose any one of the following conditions hold:
  \begin{enumerate}
  \item[(i)] $\bS=\{1,2\}$.
  \item[(ii)] $c^\rho : \clP(\bS)\setminus\clN \to \Re$ is continuous. 
    \end{enumerate}
  Then the filter is stable in $\chisq$-divergence. 
\end{theorem}

\begin{proof}
	See Appendix~\ref{apdx:pf-finite}.
\end{proof}

\begin{remark}[Contd. from Rem.~\ref{rem:continuity}]
It is shown in \Lemma{prop:crho_positive} that the function $c^\rho$ is continuous at interior points in $\clP(\bS)$. Therefore, the continuity condition ((ii) in \Thm{thm:finite-case}) entails continuity at the boundary points that are not in $\clN$.  For $d=2$, both the boundary points are in $\clN$ and hence the continuity condition is not required. See also Rem.~\ref{rem:continuity_appdx} in Appendix~\ref{apdx:pf-finite}. 
\end{remark}

\section{Discussion and Future Work}\label{sec:conc}

\subsection{Practical significance}

There are two manners in which these results are of practical
significance.  One, our work is important for the analysis  and design of
algorithms for numerical approximation of  the nonlinear filter~\cite{taghvaei2023survey}.
Specifically, the error analysis of these algorithms require estimates
of the two constants related to the exponential decay (the Poincar\'e constant
$c$) and the transient growth
(constant $\frac{1}{\underline{a}}$ in~\eqref{eq:exp_stability_formula})~\cite[Prop. 2]{al2023optimal}.   

The second manner of
practical significance comes from design of reinforcement learning
(RL) algorithms in partially observed settings of the problem.  Many
of these algorithms are based on windowing the past observation data and
using the windowed data as an approximate information state~\cite{subramanian2019approximate,subramanian2022approximate,kao2022common}.  The Poincar\'e
constant is useful to estimate the length of the window for
approximately optimal performance.

\subsection{Future work}

While there are a number of tasks around extending and completing the program begun in~\Sec{sec:model_prop}, it is noted that the definition of backward map~\eqref{eq:bmap} is {\em not} limited to the HMMs with white noise observations (which is the model assumed in all of our work on duality).  This suggests that it may be possible to extend duality and the associated filter stability analysis to a more general class of HMMs.

\section{Acknowledgment}

It is a pleasure to acknowledge Sean Meyn, Amirhossein Taghvaei, and Alain Bensoussan for many useful technical discussions on the topics of duality and stability theory.

\appendix

\section{Appendix}

\subsection{Proof of Proposition~\ref{prop:chisq-stability-implication}} \label{ss:pf-prop71}

Suppose $\mu,\nu\in\clP(\bS)$ and $\mu\ll\nu$. Let $\gamma = \frac{\ud
  \mu}{\ud \nu}$. Then the three forms of $f$-divergence are defined as follows:
	\begin{align*}
		\text{(KL divergence)}\qquad& \kl(\mu\mid \nu) := \int_\bS \gamma(x) \log(\gamma(x))\ud \nu(x)\\
		\text{($\chi^2$ divergence)}\qquad& \chisq(\mu\mid \nu) := \int_\bS (\gamma(x) - 1)^2\ud \nu(x)\\
		\text{(Total variation)}\qquad& \|\mu-\nu\|_\tv := \int_\bS \half |\gamma(x)-1|\ud \nu(x)
	\end{align*}
For these, the following inequalities are standard (see~\cite[Lemma 2.5 and 2.7]{Tsybakov2009estimation}):
\[
2\|\mu-\nu\|_\tv^2 \le \kl(\mu\mid \nu)\le \chisq(\mu\mid\nu)
\]
The first inequality is called the Pinsker's inequality.  The result
follows directly from using these inequalities.  For $L^2$ stability, observe that for any $f \in C_b(\bS)$,
\[
\pi_T^\mu(f) - \pi_T^\nu(f) = \pi_T^\nu(f\gamma_T) - 
\pi_T^\nu(f)\pi_T^\nu(\gamma_T)
\]
Therefore by Cauchy-Schwarz inequality,
\[
|\pi_T^\mu(f)-\pi_T^\nu(f)|^2 \le \frac{\operatorname{osc}(f)}{4} 
\chisq(\pi_T^\mu\mid \pi_T^\nu)
\]
where $\operatorname{osc}(f) = \sup_{x\in\bS} f(x) - \inf_{x\in\bS} f(x)$ denotes the oscillation of 
$f$. Taking $\E^\mu(\cdot)$ on both sides yields the conclusion.

\subsection{Rate bounds for HMM on finite state-space}\label{apdx:forward-map-rates}

A majority of the known bounds for exponential rate of convergence are
for HMMs on finite state-space.  For the ergodic signal model, 
bounds for the stability index $\bar{\gamma}$ (see
Rem.~\ref{rm:contraction-literature}) are tabulated in
Table~\ref{tb:rates} together with references in literature where
these bounds have appeared.  All of these bounds have also been
derived using the approach of this paper.  The bounds are given in terms of
the conditional Poincar\'e constant $c$ (see Rem.~\ref{rem:cPI}) and appear as examples in  our prior
conference paper~\cite{kim2021ergodic}.  

For the non-ergodic signal model, again in finite state-space
settings, additional bounds are known
as follows~\cite[Thm.~7]{atar1997lyapunov}:
	\begin{align*}
		\limsup_{r\to 0}r^2 \overline{\gamma} &\le -\half  \sum_{i \in \bS}\bmu(i)\min_{j\neq i}|h(i)-h(j)|^2\\
		\liminf_{r\to 0}r^2 \overline{\gamma} &\le -\half  \sum_{i,j \in \bS}\bmu(i)|h(i)-h(j)|^2
	\end{align*}
where $r$ is the standard deviation of the measurement noise $W$.
Derivation of these latter pair of bounds using the approach of this paper is open.

\begin{table}
	\centering
	\renewcommand{\arraystretch}{1.2}
	\caption{Rate bounds for finite-state HMM$^\dagger$} 
	\small
	\begin{tabular}{p{0.01\textwidth}p{0.17\textwidth}p{0.12\textwidth}p{0.1\textwidth}}\label{tb:rates}
		\# &	{\bf Bound} 	& {\bf Literature} ($-\bar{\gamma}$) & {\bf Our work}
		($c$) \\
		\hline \hline 
		(1) & $\min_{{i\neq j}} \sqrt{A(i,j)A(j,i)}$ &
		\cite[Thm.~5]{atar1997lyapunov}
		& \cite[Ex.~4]{kim2021ergodic}
		\\
		& & \cite[Thm.~4.3]{baxendale2004asymptotic} &\\
		& & \cite[Corr.~2.3.2]{van2006filtering} & \\[3pt] \hline
		(2) & $\sum_{i\in\bS}
		\bmu(i) \min_{j\neq i}A(i,j)$ &  \cite[Thm.~4.2]{baxendale2004asymptotic} & \cite[Ex.~2]{kim2021ergodic} \\[3pt]
		\hline 
		(3) & $\sum_j \min_{i\neq j} A(i,j)$ &
		~\cite[Ass.~4.3.24]{Moulines2006inference}&
		\cite[Ex.~3]{kim2021ergodic} \\[3pt]
		\hline
		\multicolumn{4}{p{0.45\textwidth}}{$^\dagger$$\{A(i,j):1\leq i,j\leq
          d\}$ is the generator (a transition rate matrix) for the state process and $\bmu$
          is an invariant measure.}
	\end{tabular}
\end{table}

\subsection{Calculation of $\chisq$-divergence}\label{apdx:chisq}

Suppose $\{\pi_t^\mu:t\geq 0\}$ and $\{\pi_t^\nu:t\geq 0\}$ are
the solutions of the nonlinear filtering equation~\eqref{eq:Kushner} starting from prior
$\mu$ and $\nu$, respectively.  Then 
	\begin{equation}\label{eq:forward-equation-chisq}
		\ud \chisq(\pi_t^\mu\mid \pi_t^\nu) = -(\pi_t^\nu\big(\Gamma \gamma_t\big)	+ \clV_t^\mu(\gamma_t,h)\cdot\clV_t^\nu(\gamma_t,h))
		\ud t  + C_t^\tp \ud I_t^\mu
	\end{equation}
where $C_t=\pi_t^\mu\big(\gamma_t (h+\pi_t^\nu(h) - 2\pi_t^\mu(h))\big)$.  With $h=c\ones$, two terms on the right-hand
side are zero 
and the formula~\eqref{eq:rate-chisq-Markov} is obtained.  Before
describing the derivation of~\eqref{eq:forward-equation-chisq}, a
remark concerning the direct use of this equation for the purpose of
filter stability is included as follows:

\begin{remark}\label{rem:chisq_sign_indet}
The term $-\pi_t^\nu(\Gamma \gamma_t)$ on the right-hand
side of~\eqref{eq:rate-chisq-Markov}  is non-positive. However, 
the product
	$\clV_t^\mu(\gamma_t,h) \cdot \clV_t^\nu(\gamma_t,h)$ is sign-indeterminate.  Therefore,
        the equation has not been useful for 
        the asymptotic analysis of the $\chisq$-divergence. 
\end{remark}

\newP{Derivation of~\eqref{eq:forward-equation-chisq}} Using the
equation~\eqref{eq:Kushner} for the filter
\begin{align*}
\ud \chisq(\pi_t^\mu\mid \pi_t^\nu) = \ud \pi_t^\nu (\gamma_t^2) =
  C_{t,1} \ud t + C_{t,2}^\tp \ud I_t^\mu + C_{t,3}^\tp \ud I_t^\nu 
\end{align*}
where the formulae for the three coefficients, obtained through an 
application of the It\^o's formula, are as follows:
\begin{align*}
C_{t,1} & = \pi_t^\nu\big(\Gamma \gamma_t\big)
         +\pi_t^\mu\big(\gamma_t|h-\pi_t^\mu (h)|^2\big) +
        \pi_t^\mu\big(\gamma_t|h-\pi_t^\nu (h)|^2\big)  \\
        & \qquad -2 \: \pi_t^\mu\big(\gamma_t(h-\pi_t^\mu (h))^\tp (h-\pi_t^\nu (h)) \big) \\
C_{t,2} & = 2\: \pi_t^\mu\big(\gamma_t(h-\pi_t^\mu (h))\big),\;\;C_{t,3}= - \pi_t^\nu\big(\gamma_t^2(h-\pi_t^\nu (h))\big)
\end{align*}
Upon noting $\ud I_t^\nu = \ud I_t^\mu +  (\pi_t^\mu(h) -
\pi_t^\nu(h))\ud t$ and simplifying the formula~\eqref{eq:forward-equation-chisq}  for divergence is
obtained.

\subsection{Proof of Theorem~\ref{thm:optimal-solution}}\label{apdx:pf-thm1}

The feedback control formula~\eqref{eq:optimal_control_formula} is
from~\cite[Thm.~3]{duality_jrnl_paper_II}. The equation for the
conditional mean and variance
is proved in~\cite[Prop.~1]{duality_jrnl_paper_II}. 
The SDE~\eqref{eq:variance_Yt_f} for the conditional variance is
derived using the Hamilton's
equation arising from the maximum principle of optimal
control~\cite[Thm.~2]{duality_jrnl_paper_II}.  Specifically, for the
optimal control 
problem~\eqref{eq:dual-optimal-control}, the
co-state process (momentum) is a measure-valued process denoted as
$\{P_t:0\leq t\leq T\}$.  The
Hamilton's equation for momentum is as follows:  For $f\in\clY$,
\begin{align*}
\ud P_t(f) = & \big(P_t(\clA f) + 2 \sigma_t^\rho (\Gamma(f,Y_t))\big)\ud t \\
& + \big(P_t(hf) + 2 U_t^\opt\sigma_t^\rho (f) + 2 \sigma_t^\rho (V_tf)\big)^\tp \ud Z_t
\end{align*}
where $\sigma_t^\rho$ denotes the unnormalized filter at time $t$ (solution of the DMZ
equation starting from initialization $\sigma_0^\rho=\rho$).  
From~\cite[Rem.~5]{duality_jrnl_paper_II}, $\clV_t^\rho (f,Y_t) =
\dfrac{P_t(f)}{2\sigma_t^\rho (\ones)}$.  The SDE~\eqref{eq:variance_Yt_f} is
then obtained by using the It\^o formula. 

An alternate derivation of~\eqref{eq:variance_Yt_f} is based on directly using
the nonlinear filter~\eqref{eq:Kushner} to show that 
\begin{align*}
	\ud \clV_t^\rho&(f,g) = \Big(\pi_t^\rho \big(\Gamma(f,g)\big) + \clV_t^\rho (g,\clA f)\\
	&\qquad+ \clV_t^\rho (f,\clA g) - \clV_t^\rho (h,f)\clV_t^\rho (h,g)\Big)\ud t\\
	&+ \Big(\clV_t^\rho (h,fg)-\pi_t^\rho (f)\clV_t^\rho (h,g)-\pi_t^\rho (g)\clV_t^\rho (h,f)\Big)^\tp \ud I_t^\rho
\end{align*}
With $g = Y_t$, using the BSDE~\eqref{eq:dual-optimal-control-b} with
$U_t = U_t^\opt$, upon simplifying, again
yields~\eqref{eq:variance_Yt_f}.

\subsection{Proof of~\Prop{prop:backward-map-and-bsde} and~\Prop{prop:PI_on_S}}\label{apdx:pf-backward-map-and-bsde}

Suppose $\pi_\tau^\rho(F) = c$ where $c$ is a deterministic constant.  Using~\eqref{eq:estimator-t}, because $\tsP^\rho\sim \sP^\rho$,
\[
c = \rho(Y_0) - \int_0^\tau \big(U_t^\opt\big)^\tp \ud Z_t,\quad \tsP^\rho\text{-a.s.}
\]
By the uniqueness of the It\^o
representation, then 
\[
U_t^\opt = 0, \quad \text{a.e.} \; 0\leq t\leq \tau ,\;\; \tsP^\rho\text{-a.s.}
\]
and, because these are equivalent, also $\sP^\rho\text{-a.s.}$. 
Using~\eqref{eq:estimator-t}, this also gives
\[
\pi_t^\rho (Y_t) = c, \quad \sP^\rho\text{-a.s.}\;\; \text{a.e.} \; 0\leq t\leq \tau 
\]
and
$
\E^\rho\big(\clV_t^\rho (Y_t)\big) = \E^\rho ( |Y_t(X_t)-c|^2)  = \var^\rho\big(Y_t(X_t)\big)
$.

Now consider the stochastic process $\{Y_t(X_t):0\leq
t\leq \tau \}$.  Because $U=0$, the It\^o-Wentzell formula is used to show
that (see~\cite[Appdx.~A]{duality_jrnl_paper_I})
\[
\ud Y_t(X_t) = V_t^\tp(X_t) \ud W_t + \ud N_t,\quad 0\leq t\leq
\tau 
\]    
where $\{N_t:t\geq 0\}$ is a $\sP^\rho$-martingale. Integrating this from $t$ to $\tau $ yields 
\[
F(X_\tau)  = Y_t(X_t) + \int_t^\tau  V_s^\tp(X_s) \ud W_s + \ud
N_s
\]  
which gives
\[
Y_t(x) = \E^\rho\big(F(X_T)\mid\clZ_t \vee [X_t=x]\big),\;\; x\in\bS, \;\; \sP^\rho\text{-a.s.}
\]

\begin{proof}[of~\Prop{prop:backward-map-and-bsde}]
	Set $\rho=\nu$ and $\tau=T$.  
	If $F \in \clL_T^\nu$ the representation as a conditional expectation shows $Y_t(x) \geq 0$, and because $\pi_t^\nu(Y_t) = 1$, $Y_t$ is a
	likelihood ratio.  For $F=\gamma_T$,
	\[
	Y_0(x) = \E^\nu(\gamma_T(X_T)| [X_0=x]),\quad x\in\bS
	\]
	The right-hand side is the backward map~\eqref{eq:bmap} which
	proves $Y_0(x)=y_0(x)$.  
\end{proof}

\subsection{Proof of Lemma~\ref{prop:crho_positive}}\label{apdx:crho_positive}

For $F\in  \mathbb{H}_\tau^\rho$, we begin by noting
\begin{equation}\label{label:norm_equiv}
\|F\|_{\mathbb{H}_\tau^\rho}^2= \tE^\rho \big( \sigma_\tau^\rho(F^2)\big) =
\E^\rho \big( \pi_\tau^\rho(F^2)\big) = \E^\rho \big( (F(X_\tau))^2\big)
\end{equation}

Consider the optimal control
system~\eqref{eq:optimal_control_system_on_S}. 
Define its solution operator 
\[
\LL:
{\cal S}^\rho\subset \mathbb{H}_\tau^\rho \to \mathbb{H}^\rho ([0,\tau])\times \mathbb{H}^\rho ([0,\tau])^m\;\;
\text{by}\;\; \LL(F) :=
  (Y,V) 
\]
As with $\LL_0$, this operator too does not depend upon $\rho$.  

Because $U^\opt=0$, with $(Y,V) = \LL(F)$, the formula for energy
becomes 
\[
\II^\rho(F) =  \E^\rho \left( \int_0^\tau \pi_t^\rho(\Gamma
  Y_t) + \pi_t^\rho(|V_t|^2) 
             \ud t \right),\quad F\in{\cal S}^\rho
\]
where note $\pi_t^\rho(|V_t|^2) := \int_\bS V_t^\tp(x) V_t(x) \ud \pi_t^\rho(x) $.  
The optimality
equation~\eqref{eq:estimator-t-variance} gives
\begin{equation}\label{eq:opt_eqn_crhopos}
\rho(Y_0^2) + \II^\rho (F) = \E^\rho ((F(X_\tau))^2),\quad F\in{\cal S}^\rho
\end{equation}
This shows that $\LL_0:{\cal S}^\rho \to L^2(\rho)$ is a bounded
operator with $\|\LL_0\|\leq 1$. 
Because $\rho(Y_0)=0$, the definition of $\beta_\tau^\rho$ becomes
\[
\beta_\tau^\rho=\inf\{\II^\rho(F) \;:\; F\in {\cal S}^\rho, \; Y_0 = \LL_0(F)\;\; \& \;\; \rho(Y_0^2)=1\}
\]
To obtain the minimizer, setting $(\tilde Y,\tilde V) = \LL(\tilde F)$ for $\tilde F\in {\cal
  S}^\rho $, the functional derivative is evaluated as
follows:
\[
\langle \nabla \II^\rho(F) , \tilde F \rangle := 2
\E^\rho \left( \int_0^\tau \pi_t^\rho(\Gamma(Y_t,\tilde Y_t) +
  \pi_t^\rho( V_t^\tp \tilde V_t) 
             \ud t \right)
\]
where note $\pi_t^\rho(V_t^\tp \tilde{V}_t) := \int_\bS V_t^\tp(x) \tilde{V}_t(x) \ud \pi_t^\rho(x) $.  From the Cauchy-Schwarz formula,
using~\eqref{label:norm_equiv} and \eqref{eq:opt_eqn_crhopos},
\begin{equation*}\label{eq:bdd_lin_fn}
|\langle \nabla \II^\rho(F) , \tilde F \rangle|^2 \leq 4
\II^\rho (F) 
\; \|\tilde F\|_{\mathbb{H}_\tau^\rho}^2
\end{equation*}
This shows that $\tilde F \mapsto \langle \nabla
\II^\rho(F) , \tilde F\rangle$ is a bounded linear functional as a map
from 
${\cal S}^\rho \subset \mathbb{H}_\tau^\rho$ into $\Re$. 
With these formalities completed, we show a minimizer exists.

\begin{proof}[of Lemma~\ref{prop:crho_positive} (existence)] 
See Appendix~\ref{appdx:lemma_crho_exist}.  
\end{proof}

For a fixed $\rho \in \clP(\bS)$, the minimizer is denoted by
$F^\rho$ with $\II^\rho(F^\rho) = \beta_\tau^\rho$.  From the proof of existence in
Appendix~\ref{appdx:lemma_crho_exist}, we also have
$\|F^\rho\|_{\mathbb{H}_\tau^\rho}^2 = 1  + \beta_\tau^\rho$. We now show continuity.

\begin{proof}[of Lemma~\ref{prop:crho_positive} (continuity)]
See Appendix~\ref{appdx:lemma_crho_cont}.  
\end{proof}

\subsection{Proof of Lemma~\ref{prop:crho_positive} (existence)}\label{appdx:lemma_crho_exist}

Consider an infimizing sequence $\{F^{(n)} \in
\clS^\rho  :n=1,2,\hdots\}$ such that
$\II^\rho\big(F^{(n)}\big)\to \beta_\tau^\rho$ and $\rho\big((Y_0^{(n)})^2\big)  =1$ with $Y_0^{(n)}:=\LL_0\big(F^{(n)}\big)$.  The proof is obtained in three steps:

\newP{Step 1} Establish a limit $F^\rho\in \clS^\rho $ such that $F^{(n)}$ converges
  weakly (in $\mathbb{H}_\tau^\rho$) to $F^\rho$.  The weak
  convergence is denoted as $
F^{(n)} \rightharpoonup F^\rho$.

\newP{Step 2} Show that $\II^\rho(F^\rho) = \beta_\tau^\rho$.

\newP{Step 3}. Set $Y_0=\LL_0(F^\rho)$. Show that $\rho(Y_0)=0$, $\rho(Y_0^2)=1$.

We begin with step 1. 
To establish a limit, use the optimality
equation~\eqref{eq:opt_eqn_crhopos}:
\[
\rho\big((Y_0^{(n)})^2\big) + \II(F^{(n)}) = \E^\rho \big((F^{(n)}
(X_\tau))^2\big),\quad n=1,2,\hdots
\]
Now $\rho\big((Y_0^{(n)})^2\big)  =1$ and because $\II(F^{(n)}) \to \beta_\tau^\rho$,
by considering a sub-sequence if necessary, using~\eqref{label:norm_equiv},
\[
\|F^{(n)}\|_{\mathbb{H}_\tau^\rho}^2 = \E^\rho ((F^{(n)}
(X_\tau))^2) < 1+ (\beta_\tau^\rho+1) 
\]
We thus have a bounded sequence in the Hilbert space $\mathbb{H}_\tau^\rho$.
Therefore, there exists a weak limit $F^\rho\in \mathbb{H}_\tau^\rho$ such
that $
F^{(n)} \rightharpoonup F^\rho$.  Because $\clS^\rho$ is closed, $F^\rho\in
\clS^\rho$.  
This completes the proof of step 1.

Next we show $\II^\rho(F^\rho) = \beta_\tau^\rho$.  Because the map from $F^\rho\mapsto \II^\rho(F^\rho)$ is
convex, we have
\[
\II^\rho(F^{(n)}) \geq \II^\rho(F^\rho) +  \langle \nabla \II^\rho(F^\rho) ,
(F^{(n)} - F^\rho) \rangle
\]
We have already shown that $\tilde F \mapsto \langle \nabla
\II^\rho(F^\rho) , \tilde F\rangle$ is a bounded linear functional.
Therefore, letting $n\to \infty$, the second term on the right-hand
side converges to zero and 
\[
\lim_{n\to\infty} \II^\rho(F^{(n)}) \geq \II^\rho(F^\rho)
\]
This property of the functional is referred to as weak lower semi-continuity.  Because
$\II^\rho(F^{(n)}) \to \beta_\tau^\rho$, we have $\II^\rho(F^\rho) \leq \beta_\tau^\rho$.
However, $\beta_\tau^\rho$ is the infimum.  It therefore must be that $\II^\rho(F^\rho) =
\beta_\tau^\rho$.  This completes the proof of the step 2.

The step 3 of the proof is to show that setting  
$Y_0:=\LL_0(F^\rho)$ gives $\rho(Y_0)=0$ and $\rho(Y_0^2)=1$.  
This is where the assumption on compactness of $\LL_0$ is used.  Because $
F^{(n)} \rightharpoonup F^\rho$ in $\mathbb{H}_\tau^\rho$ and $\LL_0$ is compact, we have $Y_0^{(n)}
\to Y_0$ in $L^2(\rho)$.  Then $|\rho(Y_0)|  = |\rho(Y_0-Y_0^{(n)})| \leq
\rho\big(|Y_0-Y_0^{(n)}|^2\big) \to 0$ and $\rho(Y_0^2) = \lim_{n\to\infty}
\rho\big((Y_0^{(n)})^2\big) =1$ by the continuity of the norm with respect to strong
convergence.

From~\eqref{eq:opt_eqn_crhopos}, it also follows that 
\[
\|F^\rho\|_{\mathbb{H}_\tau^\rho}^2 = \rho(Y_0^2) + \II^\rho (F^\rho)  = 1 + \beta_\tau^\rho
\]
and thus $\|F^{(n)}\|_{\mathbb{H}_\tau^\rho} \to
\|F^\rho\|_{\mathbb{H}_\tau^\rho}$.  Therefore, $F^\rho$ is in fact a
strong limit whereby $F^{(n)} \to F^\rho$ strongly in
$\mathbb{H}_\tau^\rho$. 

\subsection{Proof of Lemma~\ref{prop:crho_positive} (continuity)}\label{appdx:lemma_crho_cont}

Let $\epsilon_n\to 0$ as $n\to\infty$.  Our goal is to show
\[
\beta_\tau^\rho \leq \liminf_{n\to \infty} \beta_\tau^{\rho^{(n)}} \leq
\limsup_{n\to \infty} \beta_\tau^{\rho^{(n)}} \leq \beta_\tau^\rho
\]
W.l.o.g., we assume $\epsilon_{n}<\frac{1}{2}$, $\forall \; n$.  The following technical result is helpful for the proof.  

\begin{proposition}\label{prop:continuity_of_norm_and_energy}
	The following holds:
	\[
	(1 - \epsilon_n) 
	\| F\|^2_{\mathbb{H}_\tau^{\rho}}  \leq \| F\|^2_{\mathbb{H}_\tau^{\rho^{(n)}}} 
	\leq (1 + \epsilon_n) 
	\| F\|^2_{\mathbb{H}_\tau^{\rho}} 
	\]
	Consequently, $F \in \mathbb{H}_\tau^{\rho}$
	iff $F \in
	\mathbb{H}_\tau^{\rho^{(n)}}$. For $F \in \mathbb{H}_\tau^{\rho}$,
	\[
	\lim_{n\to \infty} \II^{\rho^{(n)}} (F) \to \II^{\rho} (F)
	\]
\end{proposition}

\begin{proof}[of~\Prop{prop:continuity_of_norm_and_energy}]
	We have
	\[
	\|F\|^2_{\mathbb{H}_\tau^{\rho^{(n)}}} = \E^{\rho^{(n)}} (|F(X_\tau)|^2 ) = \E^{\rho} \big(\gamma^{(n)}(X_0) |F(X_\tau)|^2\big)
	\]
	Because $(1-\epsilon_n)\leq \gamma^{(n)}(X_0) \leq (1 +\epsilon_n)$, $\sP^{\rho}$-a.s., the equivalence of norm follows.
	Next, the continuity of the functional is shown. Let $F\in\mathbb{H}_\tau^{\rho}$.  By translation symmetry,
	\begin{align*}
		\II^{\rho^{(n)}}(F) &= \II^{\rho^{(n)}}(F - \pi_\tau^{\rho^{(n)}}(F)),\\ 
		\II^{\rho}(F) &= \II^{\rho}(F - \pi_\tau^{\rho}(F))
	\end{align*}
	Let $\tilde{F}^{(n)} := (\pi_\tau^{\rho}(F)-\pi_\tau^{\rho^{(n)}}(F))\ones$ and denote
	\begin{align*}
		(Y^{(n)},V^{(n)})&:=\LL(F- \pi_\tau^{\rho^{(n)}}(F))\\
		(Y,V) &:=\LL(F - \pi^{\rho}(F))\\
		(\tilde{Y}^{(n)},\tilde{V}^{(n)})&:= \LL (\tilde{F}^{(n)})
	\end{align*}
	Then $(Y^{(n)},V^{(n)}) = (Y,V) + (\tilde{Y}^{(n)},\tilde{V}^{(n)})$.  Denote
        	\begin{align*}
		S^{(n)}&:= \int_0^\tau \Gamma Y_t^{(n)}(X_t) + |V_t^{(n)}(X_t)|^2 \ud t \\
		S&:= \int_0^\tau \Gamma Y_t(X_t) + |V_t(X_t)|^2 \ud t \\
		\tilde{S}^{(n)}&:= \int_0^\tau \Gamma \tilde{Y}_t^{(n)}(X_t) + |\tilde{V}_t^{(n)}(X_t)|^2 \ud t 
	        \end{align*}
                Then 
	\begin{align}
		& \E^\rho \big(S^{(n)}\big) = \E^\rho (S) +  \E^\rho \big(\tilde{S}^{(n)}\big) \label{eq:sn_cross_term}\\
		& + 2 \E^\rho \Big( \int_0^\tau \Gamma (Y_t, \tilde{Y}_t^{(n)})(X_t) + (V_t(X_t))^\tp( \tilde V_t^{(n)}(X_t)) \ud t  \Big) \nonumber
	\end{align}
	Meanwhile,
	\begin{align*}
		\II^{\rho}(F) & - \II^{\rho^{(n)}}(F)  = \E^\rho (S) - \E^{\rho^{(n)}}\big(S^{(n)}\big)\\
		& =
		\underbrace{\big(\E^\rho (S) - \E^\rho (S^{(n)})\big)}_{\text{term (i)}} + \underbrace{\big(\E^\rho (S^{(n)})-\E^{\rho^{(n)}}(S^{(n)})\big)}_{\text{term (ii)}}
	\end{align*}
	It is shown that each of the two terms are $O(\epsilon_n)$.  For term (ii),
	\begin{align*}
		&|\E^\rho (S^{(n)})-\E^{\rho^{(n)}}(S^{(n)})| \leq  \E^{\rho^{(n)}} \left(\left| \frac{1}{\gamma^{(n)}(x)} -1 \right| S^{(n)}\right) \\ &\leq \frac{\epsilon_n}{1-\epsilon_n} \II^{\rho^{(n)}}(F) 
		\leq \frac{\epsilon_n}{1-\epsilon_n} \| F\|^2_{\mathbb{H}_\tau^{\rho^{(n)}}} \leq \epsilon_n \frac{1+\epsilon_n}{1-\epsilon_n} \| F\|^2_{\mathbb{H}_\tau^{\rho}}
	\end{align*}
	For term~(i), using Cauchy-Schwarz in~\eqref{eq:sn_cross_term},
	\begin{align}
		|\E^\rho (S) - \E^\rho (S^{(n)})| & \leq \E^\rho (\tilde{S}^{(n)}) + 2 \sqrt{\II^\rho(F)} \sqrt{\E^\rho(\tilde{S}^{(n)})}\nonumber\\
		& \leq \E^\rho (\tilde{S}^{(n)}) + 2 \| F\|_{\mathbb{H}_\tau^{\rho}} \sqrt{\E^\rho(\tilde{S}^{(n)})}\label{eq:sn_cross_term_1}
	\end{align}
	Now, using the Bayes' formula, 
	\begin{align*}
		\tilde{F}^{(n)} & = (\pi^{\rho}(F)-\pi^{\rho^{(n)}}(F))\ones \\
		& = \left(
		\E^{\rho} (F(X_\tau)|\clZ_\tau) - \frac{\E^{\rho} (\gamma^{(n)}(X_0) F(X_\tau)|\clZ_\tau)}{\E^{\rho} (\gamma^{(n)}(X_0)|\clZ_\tau)} 
		\right)
		\ones 
	\end{align*}
	Now, because $1-\epsilon_n \leq \gamma^{(n)}(x) \leq 1+\epsilon_n$, 
	\[
	\left|1 - \frac{\gamma^{(n)}(X_0)}{\E^{\rho} (\gamma^{(n)}(X_0)|\clZ_\tau)} \right| \leq \frac{2\epsilon_n}{1 - \epsilon_n},\quad \sP^{\rho}-\text{a.s.}
	\]
	and thus
	\begin{align*}
		\| \tilde F^{(n)}\|^2_{\mathbb{H}_\tau^{\rho}} =  \E^{\rho} (|\tilde{F}^{(n)} (X_\tau)|^2) \leq 4 \frac{\epsilon_n^2}{(1-\epsilon_n)^2} \| F\|^2_{\mathbb{H}_\tau^{\rho}} 
	\end{align*}
	Finally, because $(\tilde{Y}^{(n)},\tilde{V}^{(n)}):= \LL (\tilde{F}^{(n)})$, 
	\[
	{\rho}((\tilde{Y}_0^{(n)})^2) + \E^\rho (\tilde{S}^{(n)}) =  \| \tilde F^{(n)}\|^2_{\mathbb{H}_\tau^{\rho}} \leq 4 \frac{\epsilon_n^2}{(1-\epsilon_n)^2} \| F\|^2_{\mathbb{H}_\tau^{\rho}} 
	\]
	Substituting the estimate in~\eqref{eq:sn_cross_term_1},
	\[
	|\E^\rho (S) - \E^\rho (S^{(n)})| \leq 4 \frac{\epsilon_n}{(1-\epsilon_n)^2} \| F\|^2_{\mathbb{H}_\tau^{\rho}}
	\]
	which shows that term (ii) is also $O(\epsilon_n)$.  Combining the estimates for the two terms,
	\[
	|\II^{\rho}(F)  - \II^{\rho^{(n)}}(F)| \leq \epsilon_n \frac{5-\epsilon_n^2}{(1-\epsilon_n)^2} \| F\|^2_{\mathbb{H}_\tau^{\rho}}
	\]
	which proves the continuity of the functional. 
\end{proof}

The continuity of the map $\rho\mapsto \beta_\tau^\rho$ is shown in two steps:

\newP{Step 1. Proof of $\limsup_{n\to \infty} \beta_\tau^{\rho^{(n)}} \leq
	\beta_\tau^\rho$} For $\rho$, consider a minimizer
$F^{\rho} \in \mathbb{H}_\tau^{\rho}$ such that
$\beta_\tau^\rho = \II^{\rho} (F^{\rho} )$. 
From \Prop{prop:continuity_of_norm_and_energy}, $F^{\rho}
\in\mathbb{H}_\tau^{\rho^{(n)}}$ and because $\beta_\tau^{\rho^{(n)}}$ is the
minimum value,
\[
\beta_\tau^{\rho^{(n)}} \leq \II^{\rho^{(n)}}(F^{\rho})
\]
Letting $n\to \infty$, from~\Prop{prop:continuity_of_norm_and_energy},
the right-hand side converges to
$\II^{\rho}(F^{\rho})$ which gives
\[
\limsup_{n\to \infty} \beta_\tau^{\rho^{(n)}} \leq
\II^{\rho}(F^{\rho}) =  \beta_\tau^\rho
\]

\newP{Step 2. Proof of $\beta_\tau^\rho \leq \liminf_{n\to \infty}
	\beta_\tau^{\rho^{(n)}}$} For $\rho^{(n)}$, consider a minimizer
$F^{\rho^{(n)}} \in \clS^{\rho^{(n)}} \subset \mathbb{H}_\tau^{\rho^{(n)}}$ such that 
$\beta_\tau^{\rho^{(n)}} = \II^{\rho^{(n)}} (F^{\rho^{(n)}} )$ and with $Y_0^{\rho^{(n)}} := \LL_0(F^{\rho^{(n)}} - \pi_\tau^{\rho^{(n)}} (F^{\rho^{(n)}}))$, $\rho^{(n)}((Y_0^{\rho^{(n)}})^2)=1$. From the estimate in step~1, upon
considering a subsequence if necessary,  
\[
\| F^{\rho^{(n)}}\|_{\mathbb{H}_\tau^{\rho^{(n)}}}^2 = 1  + 
\beta_\tau^{\rho^{(n)}} \leq  2  + \beta_\tau^\rho \quad \forall \; n
\]
From~\Prop{prop:continuity_of_norm_and_energy}, because $\epsilon_n \leq \frac{1}{2}$, the subsequence is bounded also in $\mathbb{H}_\tau^{\rho}$. 
Set
\[
F^{(n)} := F^{\rho^{(n)}} - \pi_\tau^{\rho} (F^{\rho^{(n)}}),\;\;   Y_0^{(n)}  = \LL_0(F^{(n)})
\]
Then $F^{(n)} \in \clS^{\rho}$.  Conclude a weak limit $F\in \clS^{\rho} \subset \mathbb{H}_\tau^{\rho}$ such that $F^{(n)} \rightharpoonup F$ (in $\mathbb{H}_\tau^{\rho}$). Because $\LL_0:\mathbb{H}_\tau^{\rho}\to L^2(\rho)$ is compact, denoting $Y_0:=\LL_0(F)$,
\[
\rho(Y_0^2) = \lim_{n\to\infty} \rho((Y_0^{(n)})^2)
\]
We make two claims as follows:
\begin{align*}
	\text{(Claim A)}&\quad
	\| F^{(n)} \|^2_{\mathbb{H}_\tau^{\rho}}
	= \| F^{\rho^{(n)}}\|_{\mathbb{H}_\tau^{\rho^{(n)}}}^2 + O(\epsilon_n)\\
	\text{(Claim B)}&\quad \rho((Y_0^{(n)})^2) = {\rho^{(n)}}((Y_0^{\rho^{(n)}})^2) + O(\epsilon_n) 
\end{align*}
From Claim B, $\rho(Y_0^2) = 1$.  Now, it is a property of weak convergence that
\[
\| F \|^2_{\mathbb{H}_\tau^{\rho}} \leq \liminf_{n\to\infty} \| F^{(n)} \|^2_{\mathbb{H}_\tau^{\rho}}
\]
From Claim A,
\[
\| F \|^2_{\mathbb{H}_\tau^{\rho}} \leq \liminf_{n\to\infty} \|F^{\rho^{(n)}}\|_{\mathbb{H}_\tau^{\rho^{(n)}}}^2 + O(\epsilon_n) 
\]
We have
\begin{align*}
	\| F \|^2_{\mathbb{H}_\tau^{\rho}} & = \rho(Y_0^2)  + \II^{\rho}(F) = 1 + \II^{\rho}(F) \\
	\| F^{\rho^{(n)}}\|_{\mathbb{H}_\tau^{\rho^{(n)}}}^2 & = {\rho^{(n)}}((Y_0^{\rho^{(n)}})^2) + 
	\II^{\rho^{(n)}}(F^{\rho^{(n)}})  = 1 + 
	\beta_\tau^{\rho^{(n)}}
\end{align*}
Combining
\[
1 + c^{\rho} \leq 1 + \II^{\rho}(F) \leq \liminf_{n\to\infty} (1  + 
\beta_\tau^{\rho^{(n)}} + O(\epsilon_n))
\]
which shows $\beta_\tau^\rho \leq \liminf_{n\to \infty}
\beta_\tau^{\rho^{(n)}}$.  It remains to prove the two claims.

\newP{Proof of Claim A} Let
$
\tilde{F}^{(n)} := (\pi_\tau^{\rho^{(n)}} (F^{\rho^{(n)}}) - \pi_\tau^{\rho} (F^{\rho^{(n)}}))\ones
$. 
Then, by repeating the argument in the proof of~\Prop{prop:continuity_of_norm_and_energy},
\[
\| \tilde{F}^{(n)}\|_{\mathbb{H}_\tau^{\rho}}^2 \leq 4 \frac{\epsilon_n^2}{(1-\epsilon_n)^2} \|F^{\rho^{(n)}}\|_{\mathbb{H}_\tau^{\rho}}^2 = O(\epsilon_n^2)
\]
Because $F^{(n)} = F^{\rho^{(n)}} + \tilde{F}^{(n)}$, the claim is proved from using~\Prop{prop:continuity_of_norm_and_energy}. 

\newP{Proof of Claim B} Let $\tilde{Y}_0^{(n)}:= \LL_0 (\tilde{F}^{(n)})$. Then $
{\rho}((\tilde{Y}_0^{(n)})^2) \leq  \| \tilde{F}^{(n)}\|_{\mathbb{H}_\tau^{\rho}}^2$. 
Because $Y_0^{(n)}  = Y_0^{\rho^{(n)}} + \tilde{Y}_0^{(n)}$,
\begin{align*}
	\rho((Y_0^{(n)})^2) & = \rho((Y_0^{\rho^{(n)}})^2) + O(\epsilon_n) = {\rho^{(n)}}((Y_0^{\rho^{(n)}})^2) + O(\epsilon_n)
\end{align*}
which concludes the proof of the claim.

\subsection{Proof of Proposition~\ref{prop:var_contractive}}\label{apdx:pf-var_contractive}

The proof is based on the following technical Lemma:

\begin{lemma}\label{lem:translation}
	Let $(Y,V)$ be the solution of the optimal control
	system~\eqref{eq:optimal_control_system} with $\rho=\nu$, $\tau=T$
	and $Y_T=\gamma_T$.  Then 
	\begin{align*}
		\E^\nu\Big(  \int_t^{t+\tau}
		\pi_s^\nu (\Gamma Y_s)  +	\pi_s^\nu (|V_s|^2)
		\ud s& \mid \clZ_t\Big)
		\ge \beta_\tau^{\pi_t^\nu} \clV_{t}^\nu(Y_t),\\
                & 0\leq t \leq T-\tau,\;\;\sP^\nu\text{-a.s.}
	\end{align*}
\end{lemma}

\begin{proof}
	See Appendix~\ref{apdx:fbsde}.
\end{proof}

From Lemma~\ref{lem:translation}, using the formula~\eqref{eq:martingale_with_U0} for martingale in \Prop{prop:backward-map-and-bsde}, for $0\leq t\leq T-\tau$,
	\begin{align*}
		\beta_\tau^{\pi_t^\nu} \clV_{t}^\nu(Y_t) & \le \E^\nu\Big( \int_{t}^{t+\tau}
		\pi_s^\nu (\Gamma Y_s) + \pi_s^\nu(|V_s|^2)
		\ud s \mid \clZ_{t}\Big) \\
                & = \E^\nu\big(\clV_{t+\tau}^\nu(Y_{t+\tau})\mid\clZ_{t}\big) - \clV_{t}^\nu(Y_{t}),\;\;\sP^\nu\text{-a.s.}
	\end{align*}
Therefore, using the definition of the Poincar\'e constant,
	\begin{equation}\label{eq:inc_PI_identity}
	\E^\nu\big(\clV_{t+\tau}^\nu(Y_{t+\tau})\mid\clZ_t\big)\ge e^{\tau c^{\pi_t^\nu}}\clV_t^\nu(Y_t),\;\;\sP^\nu\text{-a.s.}
	\end{equation}

In both the proof of~\Prop{prop:var_contractive} and in derivation of~\eqref{eq:exp_stability_formula}, let $N = \lfloor T/\tau\rfloor$ and partition the interval $[0,T]$ as $0=t_0<\hdots < t_k < \hdots < t_{N+1} = T$ where $t_k = k \tau$ for $k=0,1,2,\hdots,N$.  

\begin{proof}[of \Prop{prop:var_contractive}] For the partition, formula~\eqref{eq:inc_PI_identity} gives
\[
\E^\nu\big(\clV_{t_{k+1}}^\nu(Y_{t_{k+1}})\mid\clZ_{t_k}\big)\ge e^{\tau c^{\pi_{t_k}^\nu}}\clV_{t_k}^\nu(Y_{t_k}),\;\;\sP^\nu\text{-a.s.},\;k<N
\]
and therefore,
\begin{align*}
	\E^\nu\big(e^{-\tau C_{k}}&\clV_{t_k}^\nu(Y_{t_k})\big)\\
	 &\le \E^\nu\big(e^{-\tau C_{k}}e^{-\tau c^{\pi_{t_k}^\nu}}\E^\nu\big(\clV_{t_{k+1}}^\nu(Y_{t_{k+1}})\mid\clZ_{t_{k}}\big)\big)\\
	 &=\E^\nu\big(e^{-\tau C_{k+1}}\clV_{t_{k+1}}^\nu(Y_{t_{k+1}})\big),\quad k<N
\end{align*}
A recursive application of this identity gives
	\begin{align*}
	&	\var^\nu(Y_0(X_0)) = \clV_{t_0}^\nu (Y_{t_0}) \le \E^\nu\big(e^{-\tau C_1}\clV_{t_1}^\nu(Y_{t_1})\big)\\
		&\le \E^\nu\big(e^{-\tau C_2}\clV_{t_2}^\nu(Y_{t_2})\big)\le \cdots \le \E^\nu\big(e^{-\tau C_N}\clV_{t_N}^\nu(Y_{t_N})\big)
	\end{align*}
        Therefore,
        \begin{align*}
          & \var^\nu(Y_0(X_0))  \leq \E^\nu\big(e^{-\tau C_N}\clV_{t_N}^\nu(Y_{t_N})\big)\\
          &\leq \E^\nu\big(e^{-\tau C_N}\E^\nu(\clV_T^\nu(Y_T)\mid\clZ_{t_N})\big) = \E^\nu\big(e^{-\tau C_N}\clV_T^\nu(Y_T)\big)
        \end{align*}
        which concludes the result because $Y_T=\gamma_T$ and $Y_0=y_0$ (from \Prop{prop:backward-map-and-bsde}). 
\end{proof}

\newP{Proof of formula~\eqref{eq:exp_stability_formula}} From the definition of ${\cal N}$, we have
\begin{align*}
  c^{\pi_t^\nu} & \geq c, \quad \pi_t^\nu \in \clP(\bS)\setminus {\cal N}\\
  \clV_t^\nu(Y_t) & = 0, \quad \pi_t^\nu \in  {\cal N}
\end{align*}
Therefore, for the partition, formula~\eqref{eq:inc_PI_identity} gives
\[
\E^\nu\big(\clV_{t_{k+1}}^\nu(Y_{t_{k+1}})\mid\clZ_{t_k}\big)\ge e^{\tau c}\clV_{t_k}^\nu(Y_{t_k}),\;\;\sP^\nu\text{-a.s.},\;k<N
\]
and upon taking expectations of both sides
\[
\var^\nu(Y_{t_{k+1}}(X_{t_{k+1}})) \geq  e^{\tau c} \var^\nu(Y_{t_{k}}(X_{t_{k}})),\quad k<N
\]
A recursive application of this identity gives
\begin{align*}
\var^\nu(Y_0(X_0)) & \leq e^{-c \tau N} \var^\nu(Y_{t_N}(X_{t_N}))\\ & \leq e^{-c (T-\tau)}  \var^\nu(\gamma_T(X_T))
\end{align*}
Meanwhile, from~\eqref{eq:chisq_identity_intro},
\begin{align*}
 \big(\E^\mu\big(\chisq(\pi_T^\mu\mid\pi_T^\nu)\big)\big)^2 & \le \var^\nu(\gamma_0(X_0))\var^\nu(Y_0(X_0)) \\
\le & e^{-c (T-\tau)} \var^\nu(\gamma_0(X_0))\var^\nu\big(\gamma_T(X_T)\big) 
\end{align*}
Since $\var^\nu\big(\gamma_T(X_T)\big) =
\E^\nu\big(\chisq(\pi_T^\mu\mid\pi_T^\nu)\big)$, divide both sides by
$\var^\nu\big(\gamma_T(X_T)\big) $ to conclude 
\[
R_T \E^\mu\big(\chisq(\pi_T^\mu\mid\pi_T^\nu)\big)\le e^{-c (T-\tau)} \var^\nu(\gamma_0(X_0))
\]
The result follows because $R_T\geq \underline{a}$ (see Remark below). \qed

\begin{remark}[Lower bound for the ratio $R_T$]\label{rem:apdx:RT}
Since $R_T$ is the ratio of expectations of the same random variable
$\clV_T^\nu(\gamma_T)=\chisq(\pi_T^\mu\mid\pi_T^\nu)$ under measures $\sP^\mu$ and $\sP^\nu$
\[
R_T = \frac{\E^\mu\big(\clV^\nu_T(\gamma_T)\big)}{\E^\nu\big(\clV^\nu_T(\gamma_T)\big)}
\ge \mathop{\operatorname{essinf}}_{\omega\in\Omega} \frac{\ud \sP^\mu}{\ud
  \sP^\nu} (\omega) = \mathop{\operatorname{essinf}}_{x\in\bS}\frac{\ud \mu}{\ud \nu}(x) = \underline{a}
\]
An alternative formula for the ratio is as follows:
\[
R_T = \frac{\E^\mu\big(\clV_T^\nu(\gamma_T)\big)}{\E^\nu\big(\clV_T^\nu(\gamma_T)\big)}
=\frac{\E^\nu\big(A_T\clV_T^\nu(\gamma_T)\big)}{\E^\nu\big(\clV_T^\nu(\gamma_T)\big)}
\]
where the change of measure (see~\cite[Sec.~4.5.1]{JinPhDthesis}):
\begin{align*}
\quad A_T:=\frac{\ud \sP^\mu|_{\clZ_T} }{\ud \sP^\nu|_{\clZ_T}} = \exp\Big(&\int_0^T (\pi_t^\mu(h)-\pi_t^{\nu}(h))  \ud I^\mu_t 
\\ &- \half \int_0^T |\pi_t^\mu(h)-\pi_t^{\nu}(h)|^2 \ud t
\Big)
\end{align*}
Now, $\{A_T:T\geq 0\}$ is a non-negative
$\sP^\nu$-martingale with $\E^\nu(A_T)=\E^\nu(A_0)=1$ and therefore, by the martingale convergence theorem, there exists a random variable $A_\infty$ such
that $A_T \stackrel{(T\to\infty)}{\longrightarrow}A_\infty$. It is
possible that an improved asymptotic lower bound for $R_T$ can be
obtained by showing that $\text{essinf}_{\omega\in\Omega}
A_\infty(\omega)>0$.  
\end{remark}

\subsection{Proof of \Lemma{lem:translation}}\label{apdx:fbsde}

\def\tY{\tilde{Y}}
\def\tV{\tilde{V}}

The proof requires showing a Markov property of the optimal control
system~\eqref{eq:optimal_control_system}.  

\newP{Markov property of the optimal control system}
Because $U^\opt=0$, the optimal control system~\eqref{eq:optimal_control_system} is the BSDE
\begin{align}
	-\ud Y_t(x) &= \big((\clA Y_t)(x) + h^\tp(x)V_t(x)\big)\ud t -
	V_t^\tp(x) \ud Z_t, \nonumber\\
	\quad Y_T(x) &= \gamma_T(x)=\frac{\ud \pi_T^\mu}{\ud
		\pi_T^\nu}(x),\;\; x\in\bS, \;\;0\leq t\leq T \label{eq:closed-loop-bsde}
\end{align}
Since the terminal value $Y_T$ is a function of
$\pi_T^\nu$ and $\pi_T^\mu$,  which are both Markov processes, the
Markov property follows from the theory of forward-backward
SDEs~\cite[Chapter 5]{Zhang2017}.  Specifically, for time $s\in[t,T]$,
let $\pi_s^{p,t}$ denote the solution of~\eqref{eq:Kushner} with initial condition $\pi_t=p$.
Then
\begin{align*}
	\pi_s^{\pi_t^\mu(\omega),t}(\omega) = \pi_s^\mu(\omega),  \;\;
	\sP^\mu\text{-a.e.}\;\omega,\;\; t\leq s\leq T \\
	\pi_s^{\pi_t^\nu(\omega),t}(\omega) = \pi_s^\nu(\omega),  \;\;
	\sP^\nu\text{-a.e.}\;\omega,\;\; t\leq s\leq T 
\end{align*}    
Therefore, express
\[
\gamma_T(x)=\frac{\ud \pi_T^\mu}{\ud
	\pi_T^\nu}(x) =\frac{\ud \pi_T^{\pi_t^\mu,t}}{\ud
	\pi_T^{\pi_t^\nu,t}}(x),\;\; x\in\bS
\]
and consider the following BSDE over the time-horizon $[t,T]$:
\begin{align*}
	-\ud \tY_s(x) &= \big((\clA \tY_s)(x) + h^\tp(x)\tV_s(x)\big)\ud s -
	\tV_s^\tp(x) \ud Z_s, \nonumber\\
	\quad \tY_T(x) &= \frac{\ud \pi_T^{\pi_t^\mu,t}}{\ud
		\pi_T^{\pi_t^\nu,t}}(x),\;\; x\in\bS, \;\;t\leq s\leq T \label{eq:closed-loop-bsde-tilde}
\end{align*}
Note that the solution $(\tY,\tV)=\{(\tY_s,\tV_s):t\leq s \leq T\}$
depends on $\pi_t^\nu$ and $\pi_t^\mu$ because of the nature of the
terminal condition. 
The theory of Markov BSDE is used to
assert the following (see~\cite[Ch.~5]{Zhang2017}):

\begin{lemma}[Markov property of the BSDE]\label{lem:lem_MP}
	Let $(Y,V)=\{(Y_t,V_t):0\leq t\leq T\}$ be the solution of~\eqref{eq:closed-loop-bsde}.  Then
	\begin{itemize}
		\item $\tY_s= Y_s$ and $\tV_s= V_s$ for all $t\le s \le T$,
		$\sP^\nu$-a.s..
		\item Given $\pi_t^\mu$ and $\pi_t^\nu$ at time $t$, $\{(\tY_s,\tV_s):t \le s\le T\}$ is independent of $\clZ_t$.
	\end{itemize}
\end{lemma}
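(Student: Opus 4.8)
The plan is to reduce both assertions to two facts already available: the cocycle (flow) property of the Kushner--Stratonovich equation recorded in the display above, and the well-posedness and measurability theory for Markov BSDEs from~\cite[Ch.~5]{Zhang2017}. Throughout I would work under $\tsP^\nu$, under which $Z$ is a Brownian motion independent of $X$; since $\tsP^\nu\sim\sP^\nu$, every ``$\tsP^\nu$-a.s.'' statement is equivalently ``$\sP^\nu$-a.s.''. It is worth recording at the outset that the filter admits a robust (pathwise) version that is a measurable functional of the observation path $Z$; with this version the cocycle identities $\pi_s^{\pi_t^\rho,t}=\pi_s^\rho$ (for $\rho\in\{\mu,\nu\}$) become statements about $Z$-paths, and since $Z$ has the same (Wiener) law under $\tsP^\mu$ and $\tsP^\nu$, each holds $\tsP^\nu$-a.s., hence $\sP^\nu$-a.s.

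\emph{First bullet.} The restriction of $(Y,V)$ to $[t,T]$ and the pair $(\tY,\tV)$ solve the \emph{same} linear BSDE on $[t,T]$: the generator $(s,y,z,x)\mapsto(\clA y)(x)+h^\tp(x)z(x)$ is affine in $(y,z)$ and unchanged, and the martingale term $-\tV_s^\tp(x)\ud Z_s$ is identical. Hence it suffices to match the two terminal data $\tsP^\nu$-a.s., and this is exactly the flow identity displayed above: $\pi_T^{\pi_t^\mu,t}=\pi_T^\mu$ and $\pi_T^{\pi_t^\nu,t}=\pi_T^\nu$ give $\tY_T=\ud\pi_T^{\pi_t^\mu,t}/\ud\pi_T^{\pi_t^\nu,t}=\ud\pi_T^\mu/\ud\pi_T^\nu=\gamma_T=Y_T$. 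Uniqueness for this BSDE in $\mathbb{H}^\nu([t,T])\times\mathbb{H}^\nu([t,T])^m$ (subtract the equations, apply It\^o's formula to the squared norm, and invoke Gr\"onwall; the affine structure makes the Lipschitz estimate immediate) then forces $(\tY_s,\tV_s)=(Y_s,V_s)$ for a.e.\ $s\in[t,T]$, $\sP^\nu$-a.s. One should also note that $(Y,V)|_{[t,T]}$ lies in this shorter-horizon solution space, which is inherited from $(Y,V)\in\mathbb{H}^\nu([0,T])\times\mathbb{H}^\nu([0,T])^m$.

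\emph{Second bullet.} Here I would use the measurable dependence of the BSDE solution on its data. By~\cite[Ch.~5]{Zhang2017}, the map sending a terminal datum together with the driving increments $\{Z_r-Z_t:t\le r\le T\}$ to the solution $(\tY,\tV)$ on $[t,T]$ is Borel measurable between the relevant spaces; moreover, through the filter flow, the terminal datum $\ud\pi_T^{\pi_t^\mu,t}/\ud\pi_T^{\pi_t^\nu,t}$ is itself a measurable function of the pair $(\pi_t^\mu,\pi_t^\nu)$ and of the same increments. Composing, $\{(\tY_s,\tV_s):t\le s\le T\}=\Psi\big(\pi_t^\mu,\pi_t^\nu;\{Z_r-Z_t:t\le r\le T\}\big)$ for a Borel map $\Psi$. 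Under $\tsP^\nu$ the increments $\{Z_r-Z_t:r\ge t\}$ are independent of $\clZ_t$, while $\pi_t^\mu$ and $\pi_t^\nu$ are $\clZ_t$-measurable; therefore, conditionally on $(\pi_t^\mu,\pi_t^\nu)$, the process $(\tY,\tV)$ on $[t,T]$ is independent of $\clZ_t$, and the same holds $\sP^\nu$-a.s.

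\emph{Main obstacle.} The delicate point is not the BSDE estimates, which are routine for an affine generator, but making the dependence structure rigorous: one needs the robust (path-functional) version of the filter so that the cocycle identities and the conditioning argument are simultaneously legitimate under the mutually equivalent measures in play, and one needs to be able to feed the $\clZ_t$-measurable value $\pi_t^\rho\in\clP(\bS)$ as an initial condition into the filtering equation at time $t$ and restart the BSDE afresh on $[t,T]$, identifying its solution with $(Y,V)|_{[t,T]}$. These well-posedness and flow facts are precisely what is imported from~\cite{Zhang2017} and the preceding display; granting them, the two steps above complete the argument.
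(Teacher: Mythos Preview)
Your proposal is correct. The paper's own ``proof'' consists of a single citation, ``See~\cite[Thm.~5.1.3]{Zhang2017}'', with no further argument, so you have in effect unpacked what that reference provides: the first bullet via matching terminal data through the filter's flow property and invoking BSDE uniqueness for an affine generator, and the second bullet via the measurable-dependence-on-data representation combined with independence of the post-$t$ increments of $Z$ from $\clZ_t$ under $\tsP^\nu$. Your care in passing to the robust (pathwise) filter so that the cocycle identities hold simultaneously under the equivalent measures $\sP^\mu$, $\sP^\nu$, $\tsP^\nu$ is exactly the point one would need to make rigorous when writing this out, and is not something the paper addresses explicitly.
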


\begin{proof}
	See~\cite[Thm.~5.1.3]{Zhang2017}.
\end{proof}

\begin{remark}
	A corollary to the Markov property is the following representation of the
	solution 
	\[
	Y_t = \phi_t(\pi_t^\nu,\pi_t^\mu),\quad 0\leq t\leq T
	\]
	where $\phi_t(\cdot,\cdot)$ is a deterministic function of its
	arguments.  While interesting, the representation is not used in this
	paper.  
\end{remark}

\begin{proof}[Proof of~\Lemma{lem:translation}]
	Based on the Markov property, the following transformation holds $\sP^\nu$-a.s.:
	\begin{align*} 
		&\E^\nu\Big( \int_t^{t+\tau}
		\pi_s^\nu (\Gamma Y_s) + \pi_s^\nu(|V_s|^2)
		\ud s \mid\clZ_t\Big) \\
		&=\E^\nu\Big( \int_t^{t+\tau}
		\pi_s^{\pi_t^\nu,t} (\Gamma \tY_s) + \pi_s^{\pi_t^\nu,t}(|\tV_s|^2)
		\ud s \mid\clZ_t\Big)\\
		&=\E^{\pi_t^\nu}\Big(\int_0^{\tau}
		\pi_{t+s}^{\pi_t^\nu,t} (\Gamma \tY_{t+s}) + \pi_{t+s}^{\pi_t^\nu,t} (|\tV_{t+s}|^2)
		\ud s\Big)\\
		&\stackrel{\text{(PI)}}{\ge} \beta_\tau^{\pi_t^\nu}\clV_t^\nu(\tY_t) = \beta_\tau^{\pi_t^\nu}\clV_t^\nu(Y_t) \;\; (\because, \; Y_t=\tY_t)
	\end{align*}
	where $\beta_\tau^{\pi_t^\nu}$ is now a random number ($\beta_\tau^\rho$ with $\rho=\pi_t^\nu$).
\end{proof}

\subsection{Proof of Theorem~\ref{thm:general-nonexponential-case}}\label{apdx:pf-general}

\newP{Case (i)} By definition of uniform integrability (u.i.), for each $\epsilon>0$, there exists $K$ such that
\[
\E^\nu\big(\clV_T(\gamma_T) \ones_{[\clV_T(\gamma_T) > K]}\big) \le \epsilon,\quad \forall T \ge 0
\]
Therefore,
\begin{align*}
	 \E^\nu\big(e^{-\tau C_N}\clV_T(\gamma_T)\big)  = &
	 \E^\nu\big(e^{-\tau C_N}\clV_T(\gamma_T) \ones_{[\clV_T(\gamma_T)> K]}\big) \\ & + \E^\nu\big(e^{-\tau C_N}\clV_T(\gamma_T) \ones_{[\clV_T(\gamma_T)\le K]}\big)\\
	&\le \epsilon + K\E^\nu\big(e^{-\tau C_N}\big)
\end{align*}
The second term converges to zero from DCT.  Since $\epsilon$ is arbitrary, the result follows.

\newP{Case (ii)} For $\omega \in [C_\infty<\infty]$, $\lim_{k\to \infty} c^{\pi_{k\tau}^\nu}(\omega) = 0$. The result follows because $\{\pi_t^\mu:t\geq 0\}$ is a solution of the SDE~\eqref{eq:Kushner} and therefore a continuous function of time. 

\subsection{Proof of Proposition~\ref{prop:sufficiency}}\label{apdx:pf-main-results-1}

	Suppose any of the three conditions hold. We claim then
	\[
	\text{(claim)} \quad \II^\rho(F) = 0 \implies \VV = 0
	\]
	If the claim is true, the proof is by contradiction.  Suppose $c^\rho = 0$, then by 	Lemma~\ref{prop:crho_positive} there exists $\II^\rho(F) = 0$ such that $\VV =1$ which contradicts the claim.  It remains to prove the claim.  For each of the three cases, the proof is described in the remainder of this section. 

\newP{(i) Ergodic case} At time $t$, let $\rho_t$ denote the probability law of $X_t$ (without
conditioning).  Then because the Markov process is ergodic, for any
$t>0$, the invariant measure $\bmu \ll \rho_t$ (as measures on $\bS$).  W.l.o.g., take $\bS'
= \text{supp}(\rho_t)$ as the new state-space and consider the Markov
process on $\bS'$.  It is again ergodic with the invariant measure
$\bmu\in\clP(\bS')$ and using Defn.~\ref{def:ergodic} of ergodicity,
\begin{equation}\label{eq:ergodic_claim}
	\Gamma f(x)= 0,\;\;\forall x\in \bS' \implies f(x) = c,\;\;\forall x\in\bS'
\end{equation}

Suppose $\II^\rho(F)= 0$.  Because $\sP^\rho\sim\tsP^\rho$,
\begin{align*}
	\E^\rho \left( \int_0^\tau \Gamma Y_t (X_t) \ud t \right) & =0 \\
	\implies
	\Gamma Y_t (X_t) & =0, \; \; \tsP^\rho\text{-a.s.},\;\text{a.e.} \; 0\leq t\leq \tau
\end{align*}
Pick a positive $t$ such that $\Gamma Y_t (X_t) =0$,
$\tsP^\rho\text{-a.s.}$.  Now,  under $\tsP^\rho$,
$X_t\sim\rho_t$, and $X_t$ and $Y_t$ are independent.  Therefore, 
\begin{align*}
	0 = \tE^\rho(\Gamma Y_t(X_t)) = \tE^\rho(\rho_t(\Gamma Y_t)) 
	\implies \rho_t(\Gamma Y_t) = 0, \;\; \tsP^\rho\text{-a.s.}
\end{align*}
Using~\eqref{eq:ergodic_claim},
\begin{equation*}\label{eq:to_prove}
	\rho_t(\Gamma Y_t) = 0, \;\; \tsP^\rho\text{-a.s.} \implies Y_t(x) =
	c_t,\;\;x\in\bS', \; \tsP^\rho\text{-a.s.} 
\end{equation*}
where $c_t$ is $\clZ_t$-measurable.  Then because
$\sP^\rho\sim\tsP^\rho$,
\[
\E^\rho(\clV_t^\rho(Y_t)) \leq \E^\rho( |Y_t(X_t) - c_t|^2) = 0
\]
and the result follows because $\var^\rho(Y_0(X_0))\leq
\E^\rho(\clV_t^\rho(Y_t))$ using~\eqref{eq:estimator-t-variance}.

\begin{remark}
	Note that {\em only} the part of the energy involving the carr\'e du
	champ is used in the proof of the ergodic signal case.  Therefore, for
	an HMM $(\clA,h)$, the
	conclusion depends only upon $\clA$ and holds irrespective of the model $h$ for observations.  
\end{remark}

\newP{(ii) Observable case} The proof is given for HMMs more general than finite state-space: In Defn.~\ref{def:obsvbl}, $\clO$ is now a subspace of $C_b(\bS)$ satisfying the two properties (enumerated as (i) and (ii) in the definition).  In the general setting, an HMM is said to be observable if $\clO$ is dense in $L^2(\rho)$ (written as $\bar{\clO} = L^2(\rho)$).

The key to prove the result is the following Lemma:
\begin{lemma}\label{Lem:obsvbl}
	Suppose $\II^\rho(F) = 0$.  Then for each $ f\in\clO$,
	\[
	\clV_t^\rho(f,Y_t) = 0,  \quad \sP^\rho\text{-a.s.},\;\; \text{a.e.} \; 0\leq t\leq \tau
	\]
\end{lemma}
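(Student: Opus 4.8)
The plan is to run an induction along the inductive construction of the observable space $\clO$ in Definition~\ref{def:obsvbl}, using the evolution equation~\eqref{eq:variance_Yt_f} for $\clV_t^\rho(f,Y_t)$ from \Theorem{thm:optimal-solution} as the mechanism that transports the vanishing of $\clV_t^\rho(f,Y_t)$ through the two generating operations $f\mapsto\clA f$ and $f\mapsto fh$. Throughout I use (A2-o), which guarantees that $\clO$ is a well-defined subspace of $C_b(\bS)$ and that both operations keep functions inside $C_b(\bS)$.

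First I would unpack the hypothesis. Since $\II^\rho(F)=\frac1\tau\E^\rho\big(\int_0^\tau\big(\pi_t^\rho(\Gamma Y_t)+|\clV_t^\rho(h,Y_t)|^2+\clV_t^\rho(V_t)\big)\ud t\big)$ and each of the three integrands is non-negative, $\II^\rho(F)=0$ forces all three to vanish for $\ud t\otimes\sP^\rho$-a.e.\ $(t,\omega)$. Using non-negativity of $\Gamma(Y_t,Y_t)$ and of $|V_t-\pi_t^\rho(V_t)|^2$ pointwise, this says $\Gamma(Y_t,Y_t)=0$ $\pi_t^\rho$-a.s., $\clV_t^\rho(h,Y_t)=0$, and $V_t=\pi_t^\rho(V_t)$ $\pi_t^\rho$-a.s. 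Two consequences I will use repeatedly: by the pointwise Cauchy--Schwarz inequality $\Gamma(f,Y_t)^2\le\Gamma(f,f)\,\Gamma(Y_t,Y_t)$, the first of these gives $\pi_t^\rho\big(\Gamma(f,Y_t)\big)=0$ for every $f\in\clO$ (all quantities being finite in the finite-state case, and the manipulation formal otherwise); and since $V_t=\pi_t^\rho(V_t)$ $\pi_t^\rho$-a.s., $\clV_t^\rho(f,V_t)=\pi_t^\rho\big((f-\pi_t^\rho(f))(V_t-\pi_t^\rho(V_t))\big)=0$ for every bounded $f$.

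The core of the argument is to show that $\clK:=\{f\in C_b(\bS):\clV_t^\rho(f,Y_t)=0\ \text{for a.e.}\ t,\ \sP^\rho\text{-a.s.}\}$ contains $\clO$. By~\eqref{eq:variance_Yt_f} the process $t\mapsto\clV_t^\rho(f,Y_t)$ is a continuous semimartingale, so membership in $\clK$ is equivalent to $\clV_t^\rho(f,Y_t)\equiv0$ for all $t$, $\sP^\rho$-a.s.; hence $\clK$ is a linear subspace and $\ones\in\clK$ since $\clV_t^\rho(\ones,Y_t)\equiv0$. To see that $\clK$ is stable under $f\mapsto\clA f$ and $f\mapsto fh$, take $f\in\clK$. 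Then $\ud\clV_t^\rho(f,Y_t)=0$, so matching the bounded-variation and martingale parts of the decomposition~\eqref{eq:variance_Yt_f} shows that both the drift coefficient $\pi_t^\rho\big(\Gamma(f,Y_t)\big)+\clV_t^\rho(\clA f,Y_t)$ and the diffusion coefficient $\clV_t^\rho\big((f-\pi_t^\rho(f))(h-\pi_t^\rho(h)),Y_t\big)+\clV_t^\rho(f,V_t)$ vanish $\ud t\otimes\sP^\rho$-a.e. Combining the drift identity with the first consequence above gives $\clV_t^\rho(\clA f,Y_t)=0$, i.e.\ $\clA f\in\clK$. Combining the diffusion identity with the second gives $\clV_t^\rho\big((f-\pi_t^\rho(f))(h-\pi_t^\rho(h)),Y_t\big)=0$; expanding componentwise, $\big(f-\pi_t^\rho(f)\big)\big(h^j-\pi_t^\rho(h^j)\big)=fh^j-\pi_t^\rho(f)h^j-\pi_t^\rho(h^j)f+\pi_t^\rho(f)\pi_t^\rho(h^j)$, and using $\clV_t^\rho(f,Y_t)=0$, $\clV_t^\rho(h^j,Y_t)=0$, $\clV_t^\rho(\ones,Y_t)=0$ leaves $\clV_t^\rho(fh^j,Y_t)=0$ for each $j$, i.e.\ $fh\in\clK$. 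Since $\clO$ is by Definition~\ref{def:obsvbl} the smallest subspace of $C_b(\bS)$ containing $\ones$ and stable under both operations, $\clO\subseteq\clK$, which is exactly the assertion of the lemma.

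The step I expect to be the main obstacle is this core step: justifying that the drift and martingale parts of~\eqref{eq:variance_Yt_f} vanish separately once $\clV_t^\rho(f,Y_t)\equiv0$, and keeping the exceptional $\sP^\rho$-null sets under control so that $\clK$ is genuinely a linear subspace closed under both generating operations, rather than only along a countable sub-collection of functions. The algebraic expansion of $(f-\pi_t^\rho(f))(h-\pi_t^\rho(h))$ and the two Cauchy--Schwarz reductions are routine once the decomposition is set up.
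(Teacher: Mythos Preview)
Your proposal is correct and follows essentially the same approach as the paper: extract the three vanishing conditions from $\II^\rho(F)=0$, use Cauchy--Schwarz (for both $\Gamma$ and $V_t$) to kill the $\pi_t^\rho(\Gamma(f,Y_t))$ and $\clV_t^\rho(f,V_t)$ terms in~\eqref{eq:variance_Yt_f}, and then run the induction over $\clO$ by matching the drift and martingale parts of the simplified SDE to zero. Your explicit definition of the subspace $\clK$ and the concern about null-set management are slightly more careful than the paper's presentation, but the underlying argument is the same.
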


\begin{proof}
	From the defining relation for $\II^\rho(F)$,
	\[
	\pi_t^\rho (\Gamma Y_t) = 0,\; \clV_t^\rho (h,Y_t) = 0,\;
	\clV_t^\rho (V_t) = 0,\;\;\sP^\rho\text{-a.s.}
	\]
	for $\text{a.e.}\;0\le t\le \tau$.  Using the Cauchy-Schwarz formula
	then for each $ f\in C_b(\bS)$,
	\[
	|\clV_t^\rho (f,V_t)|^2 \leq \clV_t^\rho (f) \clV_t^\rho (V_t) =0 \quad \sP^\rho\text{-a.s.}
	\]
	Similarly, upon using the Cauchy-Schwarz
	formula~\cite[Eq.1.4.3]{bakry2013analysis} for the carr\'e du
	champ operator, 
	\[
	\pi_t^\rho (\Gamma(f,Y_t))= 0, \quad \sP^\rho\text{-a.s.}
	\]
	Based on these, the SDE~\eqref{eq:variance_Yt_f} for the conditional
	covariance simplifies to
	\begin{align*}
		&	\ud \clV_t^\rho (f,Y_t)  = \clV_t^\rho (\clA f,Y_t)\ud t \\
		&\quad	+ \big(\clV_t^\rho (hf,Y_t)  -
		\pi_t^\rho (h)\clV_t^\rho (f,Y_t)\big)^\tp \ud I_t^\rho,\quad 0\le t\le \tau
	\end{align*}
	Therefore, 
	\begin{align*}
		&	\clV_t^\rho (f,Y_t) = 0, \;\;\;0\leq t\leq \tau\\
		&\Longrightarrow \quad \clV_t^\rho (\clA f,Y_t) = 0,\;\clV_t^\rho (hf,Y_t) = 0, \;\;\;0\leq t\leq \tau
	\end{align*}
	Since $\clV_t^\rho (\ones,Y_t) = 0$ for all $t\in[0,\tau]$, the result follows
	from Defn.~\ref{def:obsvbl} of the observable space $\clO$.
\end{proof}

Based on the result in Lemma~\ref{Lem:obsvbl}, the proof of the claim for observable case is completed as follows:

Because $Y_0=\LL_0(F)$ and $\LL_0$ is bounded, $Y_0\in L^2(\rho)$.  
If $\bS$ is finite there is nothing to prove because $\clO=\Re^d$.  
In the case where $\clO\subsetneq \bar{\clO} = L^2(\rho)$, there exists a
sequence $\{f_n\in\clO:n=1,2,\hdots\}$ such that $f_n\to Y_0$ in $L^2(\rho)$. 
From Lemma~\ref{Lem:obsvbl}, for each $n$,
\[
\clV_0^\rho(f_n,Y_0) = 0, \quad \sP^\rho\text{-a.s.}
\]  
Therefore, 
\begin{align}\label{eq:tricky}
	\var^\rho(Y_0(X_0)=\clV_0^\rho(Y_0)  &=\clV_0^\rho (Y_0-f_n,Y_0)
\end{align}
and letting $n\to\infty$, because $f_n\to Y_0$, $\var^\rho(Y_0(X_0))=0$
using the Cauchy-Schwarz.

\newP{(iii) Detectable case} As shown in the ergodic case, if $\II^\rho(F) = 0$ then $\Gamma Y_t(x) = 0$ for all $x\in\bS'$, and therefore $Y_t \in S_0$. If the system $(\clA,h)$ is detectable, then this implies $Y_t \in \clO$. By Lemma~\ref{Lem:obsvbl}, $\E^\rho\big(\clV_t^\rho(Y_t)\big) = 0$ and the claim follows.

\subsection{Proof of Theorem~\ref{thm:finite-case}}\label{apdx:pf-finite}

Let $\delta_s$ denote the Dirac delta probability measure with support at $s\in\bS$. Denote
\begin{align*}
	{\cal N}_0 &= \{\delta_s:s \in\bS\}\\
	{\cal N}_\epsilon &= \{\rho \in \clP(\bS) : \rho(s) > 1-\epsilon \;\text{for one}\; s\in\bS\}
\end{align*}
${\cal N}_0$ is a subset of $\clP(\bS)$ comprising of $d$ Dirac delta measures ($d$ vertices of the probability simplex).  ${\cal N}_\epsilon$ is the $\epsilon$-neighborhood of ${\cal N}_0$.  We claim that ${\cal N}= {\cal N}_0$.  Assuming the claim to be true, the proof steps to show \Theorem{thm:finite-case} are as follows:

\newP{Step 1} Show that $\{\clV_T^\nu(\gamma_T) : T\geq 0\}$ is $\sP^\nu$-u.i.  This is because of the formula for the forward map (See Rem.~\ref{rm:forward-map}):
\[
\max_{x\in\bS} |\gamma_T(x)| \leq \frac{\bar{a}}{\underline{a}},\;\; \sP^\nu-\text{a.s.} 
\]

\newP{Step 2} Show that on $[C_\infty < \infty]$, $\clV_T^\nu(\gamma_T) \to 0$, $\sP^\nu$-a.s.. 
This is where the assumption of detectability is used.  From Thm.~\ref{thm:general-nonexponential-case}, on $[C_\infty < \infty]$, $c^{\pi_T^\nu(\omega)} \to 0$ $\sP^\nu$-a.s..  Because $c^\rho>0$ and $\rho\mapsto c^\rho$ is continuous for points in the interior of $\clP(\bS)$ (\Lemma{prop:crho_positive}), $\pi_T^\nu(\omega)$ eventually escapes every compact set in the interior of $\clP(\bS)$.  For $d=2$, this means that for each $\epsilon>0$, there exists a $\bar{T} = \bar{T}(\omega,\epsilon)$ such that $\pi_T^\nu(\omega) \in {\cal N}_\epsilon$ for all $T>\bar{T}$. It is a straightforward estimate then to show that
\[
\clV_T^\nu(\gamma_T) \leq 4\epsilon \Big( \frac{\bar{a}}{\underline{a}} \Big)^2,\quad T \geq \bar{T}
\]
Since $\epsilon$ is arbitrary, it follows that $\clV_T^\nu(\gamma_T) \to 0$, $\sP^\nu$-a.s.. 

\newP{Step 3} From~\eqref{eq:var_contractive},
\begin{align*}
	& \var^\nu(Y_0(X_0))  \leq \E^\nu \left( e^{-\tau C_N} \clV_T^\nu(\gamma_T) \right) = \\
	&  \E^\nu \left( \ones_{[C_\infty = \infty]} e^{-\tau C_N} \clV_T^\nu(\gamma_T) \right) 
	+ \E^\nu \left( \ones_{[C_\infty < \infty]} e^{-\tau C_N} \clV_T^\nu(\gamma_T) \right)
\end{align*}
The first of these terms goes to zero because $ e^{-\tau C_N} \to 0$ $\sP^\nu$-a.s. on $[C_\infty = \infty]$.  The second of these terms goes to zero from step 2.

\newP{Proof of the claim ${\cal N}= {\cal N}_0$} For $\rho \in \clP(\bS) \setminus {\cal N}_0$, pick a function $f\in \Re^d$ such that $\rho(f) = 0 $ and $\rho(f^2)=1$.  Such a $f$ always exists: Pick two points $s_1,s_2 \in \bS$ such $\rho(s_1)>0$ and $\rho(s_2)>0$.  Set
\[
f(s) = \begin{cases} \frac{a}{\rho(s_1)} & s=s_1\\
	-\frac{a}{\rho(s_2)} & s=s_2\\
	0 & s\in \bS\setminus \{s_1,s_2\}
\end{cases}
\]
where $a=\sqrt{\frac{\rho(s_1)\rho(s_2)}{\rho(s_1)+\rho(s_2)}}$. Now solve the forward-in-time linear ordinary differential equation
\begin{align*}
	\frac{\ud Y_t}{\ud t}(x) &= - (\clA Y_t)(x) +  h^\tp (x) \clV_t^\rho(h,Y_t) \\
	Y_0(x) &= f(x),\;\; x\in \bS, \;\;0\leq t\leq \tau
\end{align*}
This is finite-dimensional linear system with uniformly bounded random coefficients.  So, it admits a well-defined bounded solution at time $t=\tau$.  Denote the solution $Y_\tau = F$. Because $F$ is bounded, $F\in \mathbb{H}^\rho_\tau$.  Now, consider dual optimal control system~\eqref{eq:optimal_control_system} with $Y_T=F$.  Then by uniqueness of the solution, $V=0$ and $Y_0=f$.  By construction, $\VV=1$. This shows that ${\cal N}\subset {\cal N}_0$.  To show that ${\cal N}= {\cal N}_0$, note $\rho(f^2) = \rho(f)^2 = |f(s)|^2$ for $\rho = \delta_s$.  Therefore, $\VV=0$ for $\rho \in {\cal N}_0$.

\begin{remark}\label{rem:continuity_appdx}
  For $d>2$, it is still true (in step 2) that $\pi_T^\nu(\omega)$ eventually escapes every compact set in the interior of $\clP(\bS)$.  However, a subsequential limit could be to a point on the boundary.  To extend the proof to $d>2$ requires one to show that $\rho \mapsto c^\rho$ is continuous at the boundary points $\rho\in\clP(\bS)\setminus {\cal N}$.
\end{remark}

\bibliographystyle{IEEEtran}
\bibliography{../../../bibfiles/_master_bib_jin.bib,../../../bibfiles/jin_papers.bib,../../../bibfiles/extrabib.bib,../../../bibfiles/estimator_controller.bib}

\begin{IEEEbiography}[{\includegraphics[width=1in,height=1.25in,clip,keepaspectratio]{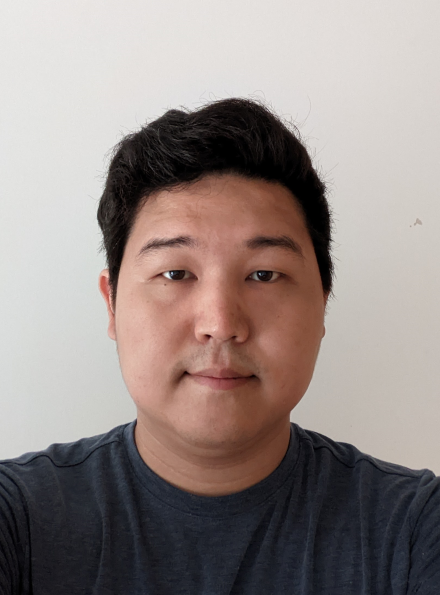}}]{Jin Won Kim} received the Ph.D. degree in Mechanical Engineering from University of Illinois at Urbana-Champaign, Urbana, IL, in 2022.
	He is now a postdocdoral research scientist in the Institute of Mathematics at the University of Potsdam.
	His current research interests are in nonlinear filtering and stochastic optimal control.
	He received the Best Student Paper Award at the IEEE Conference on Decision and Control 2019.
\end{IEEEbiography}

\begin{IEEEbiography}[{\includegraphics[width=1in,height=1.25in,clip,keepaspectratio]{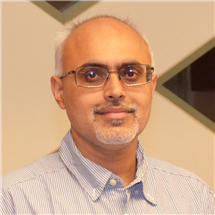}}]{Prashant G. Mehta} received the Ph.D. degree in Applied Mathematics from Cornell University, Ithaca, NY, in 2004.
	He is a Professor of Mechanical Science and Engineering at the University of Illinois at Urbana-Champaign.
	Prior to joining Illinois, he was a Research Engineer at the United Technologies Research Center (UTRC). His current research interests are in nonlinear filtering. He received the Outstanding Achievement Award at UTRC for his contributions to the modeling and control of combustion instabilities in jet-engines. His students received the Best Student Paper Awards at the IEEE Conference on Decision and Control 2007, 2009 and 2019, and were finalists for these awards in 2010 and 2012. In the past, he has served on the editorial boards of the ASME Journal of Dynamic Systems, Measurement, and Control and the Systems and Control Letters. He currently serves on the editorial board of the IEEE Transactions on Automatic Control. 	
\end{IEEEbiography}

\end{document}